\newcommand{\norm}[1]{\left\lVert#1\right\rVert}
   \definecolor{labelkey}{gray}{.8}
   \definecolor{refkey}{gray}{.8}
\providecommand{\bfa}{{\bf a}}
\providecommand{\R}{\mathbb{R}}
\providecommand{\divv}{{\rm {div}}}
\providecommand{\bfa}{{\bf a}}
\newcommand{\e}{\varepsilon}
\newcommand{\step}[1]{\medskip\noindent\textbf{Step #1. }}
\newcommand{\substep}[1]{\medskip\noindent\textit{Substep #1. }}
\newcommand{\ignore}[1]{}
\newtheorem{definition}{Definition}
\newtheorem{proposition}{Proposition}
\newtheorem{theorem}{Theorem}
\newtheorem{remark}{Remark}
\newtheorem{lemma}{Lemma}
\newtheorem{assumption}{Assumption}
\author[P Bella]{Peter Bella}
\address{TU Dortmund\\ Fakult\"at f\"ur Mathematik\\ Lehrstuhl I\\ Vogelpothsweg 87\\44227 Dortmund, Germany.}
\email{peter.bella@math.tu-dortmund.de}
\author[M. Sch\"affner]{Mathias Sch\"affner}
\address{Technische Universit\"at Dortmund, Fakult\"at f\"ur Mathematik\\
 Vogelpothsweg 87,44227 Dortmund, Germany.}
\email{mathias.schaeffner@tu-dortmund.de}
\title[Lipschitz-bounds for scalar integral functionals]{Lipschitz bounds for integral functionals with $(p,q)$-growth conditions} 
\begin{document}
\maketitle


\begin{abstract}
We study local regularity properties of local minimizer of scalar integral functionals of the form 
$$
\mathcal F[u]:=\int_\Omega F(\nabla u)-f u\,dx
$$ 
where the convex integrand $F$ satisfies controlled $(p,q)$-growth conditions. We establish Lipschitz continuity under sharp assumptions on the forcing term $f$ and improved assumptions on the growth conditions on $F$ with respect to the existing literature. Along the way, we establish an $L^\infty$-$L^2$-estimate for solutions of linear uniformly elliptic equations in divergence form which is optimal with respect to the ellipticity contrast of the coefficients.
\end{abstract}

\section{Introduction}

In this  note, we revisit the question of Lipschitz-regularity for local minimizers of integral functionals of the form
\begin{equation}\label{eq:int}
w\mapsto \mathcal F(w;\Omega):=\int_\Omega F(\nabla w)-fw\,dx.
\end{equation}
We recall in the case $F(z)=\frac12|z|^2$ local minimizer of \eqref{eq:int} satisfy $-\Delta w=f$. A classic theorem due to Stein \cite{stein} implies
\begin{equation}\label{eq:Deltaln1}
-\Delta w=f\in L^{n,1}(\Omega)\qquad\Rightarrow\qquad \nabla w\in L^\infty_{\rm loc}(\Omega),
\end{equation}
where $L^{n,1}(\Omega)$ denotes the Lorentz space (see below for a definition). In view of \cite{Cianchi92} the condition $f\in L^{n,1}(\Omega)$ is optimal in the Lorentz-space scale for the conclusion in \eqref{eq:Deltaln1}. In the last decade the implication in \eqref{eq:Deltaln1} was greatly generalized by replacing  the linear operator $\Delta$ by possibly degenerate/singular uniformly elliptic nonlinear operators, see \cite{B16,KM13,KM14,CianMaz11}. More recently, those results where extended to a wide range of non-uniformly elliptic variational problems by Beck and Mingione in \cite{BM20} (see also \cite{DM21,Filippis21} for related results for non-autonomous or non-convex vector valued problems).

In this paper, we are interested in a specific class of non-uniformly elliptic problems, namely functionals with so-called $(p,q)$-growth conditions which are described in the following

\begin{assumption}\label{ass}
Let $0<\nu\leq \Lambda<\infty$, $1<p\leq q<\infty$ and $\mu\in[0,1]$ be given. Suppose that $F:\R^n\to[0,\infty)$ is convex, locally $C^2$-regular in $\R^n\setminus\{0\}$ and satisfies
\begin{equation}\label{ass:Fpq}
\begin{cases}
\nu(\mu^2+|z|^2)^\frac{p}2\leq F(z)\leq \Lambda(\mu^2+|z|^2)^\frac{q}2+\Lambda (\mu^2+|z|^2)^\frac{p}2\\
|\partial^2 F(z)|\leq \Lambda(\mu^2+|z|^2)^\frac{q-2}2+\Lambda (\mu^2+|z|^2)^\frac{p-2}2,\\
\nu(\mu^2+|z|^2)^\frac{p-2}2|\xi|^2\leq \langle \partial^2 F(z)\xi,\xi\rangle,
\end{cases}
\end{equation}
for every choice of $z,\xi\in\R^n$ with $|z|>0$.
\end{assumption}
Regularity properties of local minimizers of \eqref{eq:int} in the case $p=q$ are classical, see, e.g.,\ \cite{Giu}. A systematic regularity theory in the case $p<q$ was initiated by Marcellini in \cite{Mar89,Mar91}, see \cite{MR21} for a recent overview.

%
%
%
%
%
%

Before we state our main result, we recall a standard notion of local minimality in the context of integral functionals with $(p,q)$-growth 
\begin{definition}\label{def:localmin}
We call $u\in W_{\rm loc}^{1,1}(\Omega)$ a local minimizer of  $\mathcal F$ given in \eqref{eq:int} with $f\in L_{\rm loc}^n(\Omega)$ if for every open set $\Omega'\Subset\Omega$ the following is true: 
\begin{equation*}
 \mathcal F(u,\Omega')<\infty
\end{equation*}
and
\begin{equation*}
 \mathcal F(u,\Omega')\leq \mathcal F(u+\varphi,\Omega')
\end{equation*}
for any $\varphi\in W^{1,1}(\Omega)$ satisfying ${\rm supp}\;\varphi\Subset \Omega'$.
\end{definition}
The main result of the present paper is

\begin{theorem}\label{T:1}
Let $\Omega\subset\R^n$, $n\geq3$ be an open bounded domain and suppose Assumption~\ref{ass} is satisfied with $1<p<q<\infty$ such that
\begin{equation}\label{eq:pqrhs}
\frac{q}p<1+\min\biggl\{\frac{2}{n-1},\frac{4(p-1)}{p(n-3)}\biggr\}.
\end{equation}
Let $u\in W_{\rm loc}^{1,1}(\Omega)$ be a local minimizer of the functional $\mathcal F$ given in \eqref{eq:int} with $f\in L^{n,1}(\Omega)$. Then $\nabla u$ is locally bounded in $\Omega$. When $p\geq 2-\frac{4}{n+1}$ or when $f\equiv0$ condition \eqref{eq:pqrhs} can be replaced by 
\begin{equation}\label{eq:pq}
\frac{q}p<1+\frac2{n-1}.
\end{equation}
\end{theorem}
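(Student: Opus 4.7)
My strategy is a standard approximation followed by a uniform a priori Lipschitz estimate. Fix a ball $B_{2R}\Subset\Omega$, let $F_\varepsilon(z):=F(z)+\varepsilon(\mu^2+|z|^2)^{q/2}$ and mollify $f$ to $f_\varepsilon\in C^\infty$. Let $u_\varepsilon\in u+W_0^{1,q}(B_R)$ minimize $\int_{B_R}(F_\varepsilon(\nabla w)-f_\varepsilon w)\,dx$. Since $F_\varepsilon$ has standard $q$-growth, $u_\varepsilon\in C^{1,\alpha}\cap W^{2,2}_{\mathrm{loc}}(B_R)$ by classical theory, and a comparison of energies gives $u_\varepsilon\to u$ in $W^{1,p}(B_R)$, so any Lipschitz estimate on $u_\varepsilon$ uniform in $\varepsilon$ transfers to $u$.

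The core is therefore a uniform bound $\|\nabla u_\varepsilon\|_{L^\infty(B_{R/2})}\leq C(\|f\|_{L^{n,1}(B_R)},\|\nabla u\|_{L^p(B_R)})$. Differentiating the Euler--Lagrange equation shows each $v_k=\partial_k u_\varepsilon$ satisfies $-\divv(A\nabla v_k)=-\partial_k f_\varepsilon$ with $A(x)=\partial^2 F_\varepsilon(\nabla u_\varepsilon(x))$; in the sublevel set $\{|\nabla u_\varepsilon|\leq M\}$ the ellipticity contrast of $A$ is of order $(1+M)^{q-p}$. To this equation I would apply the sharp $L^\infty$--$L^2$ estimate for linear divergence-form equations announced in the abstract, which provides a bound of the type
\begin{equation*}
\|v_k\|_{L^\infty(B_\rho)}\lesim (1+M)^{\theta(p,q,n)}\bigl(\|v_k\|_{L^2(B_{2\rho})}+\|f\|_{L^{n,1}(B_{2\rho})}\bigr),
\end{equation*}
with a sharp exponent $\theta$; the Lorentz hypothesis on $f$ is precisely what is needed to absorb the right-hand side, in the spirit of Stein's theorem \eqref{eq:Deltaln1}.

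The second ingredient is a family of Caccioppoli inequalities for powers of $\mu^2+|\nabla u_\varepsilon|^2$: testing the differentiated equation with $\eta^2\,\partial_k u_\varepsilon\,(\mu^2+|\nabla u_\varepsilon|^2)^{\alpha}$ and summing over $k$ yields control of $\int\eta^2(\mu^2+|\nabla u_\varepsilon|^2)^{p-2+2\alpha}|\nabla^2 u_\varepsilon|^2$ by lower-order quantities. Coupling this with the Bella--Sch\"affner Sobolev embedding on $(n-1)$-dimensional level sets (via co-area and Fubini) upgrades an $L^p$ bound on $\nabla u_\varepsilon$ to an $L^s$ bound with a gain on $s$ governed by $n-1$ rather than $n$; this is exactly where the $n-1$ of \eqref{eq:pqrhs} enters. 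Taking $M:=\|\nabla u_\varepsilon\|_{L^\infty(B_\rho)}$ at an appropriate radius, inserting the Caccioppoli--Sobolev information into the $L^\infty$--$L^2$ estimate, and absorbing $M^\theta$ on the left by Young's inequality closes the argument precisely when \eqref{eq:pqrhs} holds; the two competing constraints correspond to the ``energy-dominated'' regime (yielding $\tfrac{2}{n-1}$) and the ``$f$-dominated'' regime (yielding $\tfrac{4(p-1)}{p(n-3)}$). When $f\equiv 0$ or $p\geq 2-\tfrac{4}{n+1}$ the second constraint becomes redundant, leaving only \eqref{eq:pq}.

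I expect the main obstacle to be the sharp $L^\infty$--$L^2$ estimate itself: it requires a De~Giorgi--Nash--Moser iteration that tracks the ellipticity contrast optimally, and is the ``Along the way'' result advertised in the abstract. Once it is in place, the remainder reduces to a delicate but by-now-standard Moser iteration combined with the level-set Sobolev trick.
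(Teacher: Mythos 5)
Your plan follows the same broad strategy as the paper (regularize, differentiate the Euler--Lagrange equation, feed a Caccioppoli inequality for a power of the gradient into an optimal-contrast $L^\infty$--$L^2$ bound, absorb by Young under the gap condition), but as written it has two genuine gaps. First, the approximation scheme fails: you set $F_\varepsilon=F+\varepsilon(\mu^2+|z|^2)^{q/2}$ and minimize in $u+W^{1,q}_0(B_R)$ with the original $u$ as boundary datum, claiming that ``a comparison of energies gives $u_\varepsilon\to u$ in $W^{1,p}$''. But a priori $\nabla u$ is only in $L^p_{\rm loc}$, so $\int F_\varepsilon(\nabla u)\,dx$ may be $+\infty$ and there is nothing to compare with; this is exactly the Lavrentiev-type obstruction for $(p,q)$-growth, and with your regularization the minimizers $u_\varepsilon$ need not converge to $u$ at all. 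The paper circumvents this by mollifying the boundary datum ($\overline u_m=u\ast\varphi_{\varepsilon_m}$), perturbing $F$ only by an $\varepsilon$-multiple of a $p$-growth (or quadratic) term after a $C^2$-smoothing near the origin, using convexity/Jensen to control $\int F_{\varepsilon_m}(\nabla \overline u_m)$ by $\int_{B_{1+\varepsilon_m}}F(\nabla u)$, invoking \cite{BM20,BB16} for $W^{1,\infty}_{\rm loc}$-regularity of the approximations, and identifying the limit with $u$ through strict convexity. Without some such device the uniform Lipschitz bound does not transfer to $u$. A related, smaller point: applying the linear estimate directly to $v_k=\partial_k u_\varepsilon$ with contrast $(1+M)^{q-p}$ gives a worse condition when $p>2$; the estimate must be applied to the scalar quantity $G_T(|\nabla u|)\approx(\mu^2+|\nabla u|^2)^{p/2}$ (your ``powers of $\mu^2+|\nabla u_\varepsilon|^2$'' testing is the right Caccioppoli, but the $L^\infty$--$L^2$ step has to be run on that function, not on the individual derivatives).

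Second, the core quantitative input is deferred rather than proved: the $L^\infty$--$L^2$ estimate with contrast dependence $(\Lambda/\nu)^{(n-1)/4}$, including a forcing term measured in $L^{n,1}$, is precisely the main technical achievement of the paper, not an off-the-shelf fact. The classical De Giorgi/Moser bound carries the exponent that leads only to the weaker gap $\frac{q}{p}<1+\min\{\frac2n,\frac{4(p-1)}{p(n-2)}\}$ of \cite{BM20}; the passage from $n$ to $n-1$ in \eqref{eq:pqrhs} comes from the iteration of Lemma~\ref{L:basiciteration}, in which the cutoff is optimized via Lemma~\ref{L:optimcutoff}, the Sobolev inequality is used on $(n-1)$-dimensional spheres, and the level increments $\Delta_\ell$ are split into three pieces, one calibrated to the rearrangement function $\omega_f$ so that their sum is controlled by $\|f\|_{L^{n,1}}$ -- this is also exactly where the exponents $\gamma,\tilde\gamma$ and hence the two constraints $\frac{2}{n-1}$ and $\frac{4(p-1)}{p(n-3)}$ are produced. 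You name the ingredients (level-set Sobolev, Lorentz norm of $f$, energy- versus $f$-dominated regimes) but do not carry out the iteration, the treatment of $f$ in the Lorentz scale, or the derivation of the exponents, so the claimed gap condition is asserted rather than established. To make the proposal a proof you would need to replace the approximation step by one that actually converges to $u$, and to supply the proof of the sharp iteration lemma with forcing.
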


\begin{remark}
Theorem~\ref{T:1} should be compared to the findings of the recent papers \cite{BM20} and \cite{BS19c}:  In \cite{BM20}, Beck and Mingione proved (among many other things) the conclusion of Theorem~\ref{T:1} under the more restrictive relation 
$$
\frac{q}p<1+\min\biggl\{\frac2n,\frac{4(p-1)}{p(n-2)}\biggr\}.
$$
Hence, we obtain here an improvement in the gap conditions on $\frac{q}p$. In \cite{BS19c}, we proved Theorem~\ref{T:1} in the specific case $f\equiv 0$, $p\geq2$ and $\mu=1$.
\end{remark}

The proof of Theorem~\ref{T:1} closely follows the strategy presented in \cite{BM20} and relies on careful estimates for certain \textit{uniformly elliptic} problems. To illustrate this, let us consider the case that $F$ satisfies Assumption~\ref{ass} with $p=2<q$ and $f\equiv0$: Let $u$ be a local minimizer of $\mathcal F(\cdot,B_1)$ and assume that $u$ is smooth. Standard arguments yield
\begin{equation}\label{intro:lineq}
\divv (A(x)\nabla \partial_i u)=0\qquad\mbox{where}\qquad A:=\partial^2F(\nabla u).
\end{equation}
Hence, $\partial_i u$ satisfies a linear elliptic equation where the ellipticity ratio  $\mathcal R(x)$ of the coefficients, that is the quotient of the highest and lowest eigenvalue of $A(x)$,  is determined by the size of $|\nabla u(x)|$. More precisely, \eqref{ass:Fpq} with $p=2<q$ implies $\mathcal R(x)\sim (1+|\nabla u(x)|^{q-2})$. By standard theory for uniformly elliptic equations applied to \eqref{intro:lineq}, there exists an exponent $m=m(n)>0$ such that
\begin{equation}\label{intro:linftyl2}
\|\partial_i u\|_{L^\infty(B_\frac12)}\lesssim \|\mathcal R\|_{L^\infty(B_1)}^m\|\partial_i u\|_{L^2(B_1)}\lesssim(1+ \|\nabla u\|_{L^\infty(B_1)}^{q-2})^{m}\|\partial_i u\|_{L^2(B_1)}.
\end{equation}
Appealing to some well-known iteration arguments, it is possible to absorb the $(1+ \|\nabla u\|_{L^\infty(B_1)}^{q-2})^{m}$-prefactor on the right-hand side in \eqref{intro:linftyl2} provided that $(q-2)m<1$, which yields the restriction $\frac{q}2<1+\frac{1}{2m}$. Once an a priori Lipschitz estimate for smooth minimizer is established, the proof of Theorem~\ref{T:1} follows by a careful regularization and approximation procedure (as in e.g.\ \cite{BM20}).

The main technical achievement of the present manuscript -- using a method introduced in \cite{BS19a} -- is an improvement of the exponent $m$ in \eqref{intro:linftyl2} compared to the previous results. This improvement is \textit{optimal} for $n\geq 4$ and essentially optimal for $n=3$. More precisely, we have

\begin{proposition}\label{P:2}
Let $B=B_R(x_0)\subset\R^n$, $n\geq3$ and $\kappa\in(0,\frac12)$. There exists $c=c(n,\kappa)<\infty$, where $c=c(n)$ provided $n\geq4$, such that the following is true. Let $0<\nu\leq \lambda<\infty$ and suppose $a\in L^\infty(B;\R^{n\times n})$ satisfies that $a(x)$ is symmetric for almost every $x\in B$ and  uniformly elliptic in the sense
\begin{equation}\label{def:ellipt}
\nu|z|^2\leq a(x)z\cdot z\leq \Lambda|z|^2\quad\mbox{for every $z\in\R^n$ and almost every $x\in B$.}
\end{equation}
Let $v\in W^{1,2}(B)$ be a subsolution, that is it satisfies
\begin{equation}\label{def:subsolution}
\int_{B}a\nabla v\cdot\nabla \varphi\leq0\qquad\mbox{for all $\varphi\in C_c^1(B)$ with $\varphi\geq0$.}
\end{equation}
Then,
\begin{equation}\label{est:P1}
\sup_{\frac12 B} v\leq c\biggl(\frac{\Lambda}{\nu}\biggr)^m\biggl(\fint_{B}(v_+)^2\,dx\biggr)^\frac12,\qquad\mbox{where}\quad m:=\frac12\begin{cases}\frac{n-1}2&\mbox{for $n\geq4$}\\1+\kappa&\mbox{for $n=3$}\end{cases}.
\end{equation}
Moreover, for $n\geq4$ the exponent $m=\frac{n-1}4$ is optimal for the estimate in \eqref{est:P1} and for $n=3$ the exponent is essentially optimal in the sense that the estimate in \eqref{est:P1} is in general false for $m<\frac12$.
\end{proposition}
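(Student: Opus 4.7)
The strategy is a Moser-type iteration where the standard full-space Sobolev embedding is replaced by one exploiting the codimension-one spheres, following the method introduced by the authors in \cite{BS19a}. The first ingredient is a sharp Caccioppoli inequality: testing \eqref{def:subsolution} against $\varphi := v_+^{2\gamma-1}\eta^2$ with a Lipschitz cutoff $\eta$ and $\gamma\geq 1/2$, and estimating the cross term by $(a\nabla v\cdot\nabla v)^{1/2}(a\nabla\eta\cdot\nabla\eta)^{1/2}$ rather than by the operator norm of $a$, yields
\begin{equation*}
\int_B|\nabla(v_+^\gamma\eta)|^2\,dx \leq c\,\frac{\Lambda}{\nu}\int_B v_+^{2\gamma}|\nabla\eta|^2\,dx.
\end{equation*}
The \emph{linear} (not quadratic) dependence of the right-hand side on $\Lambda/\nu$ is essential for arriving at the final exponent $(n-1)/4$ rather than $(n-1)/2$.

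For $n\geq 4$, the classical Moser iteration pairs this with $H^1(B)\hookrightarrow L^{2n/(n-2)}(B)$ and yields the suboptimal exponent $m = n/4$. The refinement replaces the Euclidean Sobolev step by a codimension-one one: decomposing a thin annulus $B_r\setminus B_\rho$ in polar coordinates, extracting via averaging a ``good'' radius $r_\ast \in (\rho, r)$ on which $v_+^\gamma\eta$ has controlled spherical $H^1$-norm, applying the spherical Sobolev embedding $H^1(S^{n-1})\hookrightarrow L^{2(n-1)/(n-3)}$, and combining this with a one-dimensional estimate in the radial variable, the integrability exponent of $v_+$ is multiplied by $\tfrac{n-1}{n-3}$ per step rather than by $\tfrac{n}{n-2}$.

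Iterating with $\gamma_k := \bigl(\tfrac{n-1}{n-3}\bigr)^k$ on nested balls of radii $R_k := R(\tfrac12 + 2^{-k-1})$ and telescoping produces
\begin{equation*}
\sup_{B_{R/2}} v_+ \leq c\Bigl(\frac{\Lambda}{\nu}\Bigr)^m\Bigl(\fint_{B_R} v_+^2\,dx\Bigr)^{1/2}, \qquad m = \sum_{k\geq 0}\frac{1}{2\gamma_k} = \frac{n-1}{4}.
\end{equation*}
For $n=3$ the critical spherical Sobolev exponent is infinite, so I would use instead $H^1(S^2)\hookrightarrow L^{p_\kappa}$ for $p_\kappa$ large but finite, with constant degenerating as $p_\kappa\to\infty$; choosing the geometric ratio of the iteration to be $1+2\kappa$ and absorbing the $p_\kappa$-dependent constant into $c = c(n,\kappa)$ produces $m = (1+\kappa)/2$. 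For sharpness, I would consider radially symmetric coefficients $a(x) = \alpha(|x|)\hat x\otimes\hat x + \beta(|x|)(I - \hat x\otimes\hat x)$; for radial $v(x)=w(|x|)$, the equation reduces to the ODE $(r^{n-1}\alpha(r)w'(r))' = 0$, and a piecewise-constant choice of $\alpha,\beta$ realizing the ratio $\Lambda/\nu$, combined with an explicit power solution $w(r)=r^{-\sigma}$ on an appropriate annulus, achieves $\|v\|_{L^\infty(B_{R/2})}/\|v\|_{L^2(B_R)} \sim (\Lambda/\nu)^{(n-1)/4}$ for $n\geq 4$, while for $n=3$ no radial example can exceed $(\Lambda/\nu)^{1/2}$, whence $m<1/2$ is impossible.

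The principal obstacle is the refined Sobolev step: organizing the codimension-one reduction so that the Caccioppoli cross term, the mean-value selection of the sphere $S_{r_\ast}$, and the spherical Sobolev embedding all close with the \emph{linear} exponent on $\Lambda/\nu$, while still permitting the geometric shrinkage of radii required by the iteration. This interplay is precisely the technical heart of the method of \cite{BS19a}.
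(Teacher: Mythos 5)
Your starting point is fine: testing with $\eta^2(v-k)_+$ (the paper) or with powers and using the Cauchy--Schwarz inequality with respect to the bilinear form $a$ does give a Caccioppoli inequality whose right-hand side carries only one factor $\Lambda/\nu$; this is exactly how the paper obtains \eqref{L:basic:caccio} with $c_m^2=4$ and $M_1^2=\Lambda/\nu$. The gap is in your iteration step. You claim that a good-radius selection, the spherical embedding $H^1(S^{n-1})\hookrightarrow L^{2(n-1)/(n-3)}$ and a one-dimensional radial estimate let you multiply the integrability exponent of $v_+$ \emph{on solid balls} by $\tfrac{n-1}{n-3}$ per Moser step. This cannot work as stated: since $\tfrac{2(n-1)}{n-3}>\tfrac{2n}{n-2}$ there is no solid embedding at that exponent, and what the annulus decomposition actually yields is mixed-norm control of the type $L^\infty_r L^2_\omega\cap L^2_r L^{2(n-1)/(n-3)}_\omega$, whose diagonal interpolation space is only $L^s$ with $s=\tfrac{2(n+1)}{n-1}<\tfrac{2n}{n-2}$; a power iteration built on that would give an exponent \emph{worse} than the classical $n/4$, not better. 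The paper's mechanism is structurally different and this is the point your proposal misses: it runs a De Giorgi-type level-set iteration in which the cutoff is \emph{optimized} (Lemma~\ref{L:optimcutoff}), so that the term carrying $M_1^2=\Lambda/\nu$ is bounded by $\delta$-averaged spherical integrals with $\delta=2/2^*$, and the sphere Sobolev inequality together with the level gain $(k-h)^{-(2^*-2)}$ acts only on this term, while the full-dimensional Sobolev inequality enters only in terms whose constants are independent of $\Lambda/\nu$ (the measure bound \eqref{est:Aksigma} and the $L^2$-piece \eqref{est:onestep2}); see \eqref{est:onestepL} and Lemma~\ref{L:basiciteration}. This separation of where the large constant and where the integrability gain occur is not available in a pure Moser scheme, where the Caccioppoli constant and the embedding are coupled at every step, and it is precisely what produces $m=\tfrac{n-1}{4}$ (resp. $\tfrac{1+\kappa}{2}$ for $n=3$).

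Your optimality construction also fails. If $a$ is radial and $v=w(|x|)$ is a radial subsolution, then testing with radial cutoffs shows that $r^{n-1}\alpha(r)w'(r)$ is nondecreasing; were it negative at some $r_0$, one would get $|w'(r)|\gtrsim r^{-(n-1)}$ for $r<r_0$, contradicting $v\in W^{1,2}(B)$, so $w'\geq0$ and $w$ is nondecreasing. Hence $\sup_{\frac12 B}v_+$ is dominated by the values of $v_+$ on the outer annulus, and the ratio $\|v_+\|_{L^\infty(\frac12B)}/\|v_+\|_{L^2(B)}$ is bounded by a purely dimensional constant, uniformly in $\Lambda/\nu$: no radial example can certify optimality of any positive exponent. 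Moreover, for $n=3$ the observation that radial examples cannot exceed $(\Lambda/\nu)^{1/2}$ does not show that $m<\tfrac12$ is impossible; for that you must exhibit an example attaining (essentially) this growth. The paper does this with the genuinely anisotropic solution $v(x)=x_n^2+1-\Lambda|x'|^2$ of $\nabla\cdot a\nabla v=0$ with $a={\rm diag}(1,\dots,1,(n-1)\Lambda)$ (Remark~\ref{rem:Loptim}): its positivity set is a tube of width $\sim\Lambda^{-1/2}$ around the $x_n$-axis, so $\fint_{B_1}v_+^2\lesssim\Lambda^{-\frac{n-1}2}$ while $v(0)=1$, giving a ratio $\gtrsim\Lambda^{\frac{n-1}4}$, which settles optimality for $n\geq4$ and rules out $m<\tfrac12$ for $n=3$.
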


While estimate \eqref{est:P1} is in some sense optimal, the condition \eqref{eq:pqrhs} in Theorem~\ref{T:1} is in general not optimal. To see this, we recall a result of \cite{BF}: Suppose Assumption~\ref{ass} is satisfied with $\mu=1$ and $F(0)=0$, $\partial F(0)=0$, then \textit{bounded} local minimizer of \eqref{eq:int} with $f\equiv0$ are locally Lipschitz provided $q<p+2$. This can be combined with the recent local boundedness \cite{HS19}, where it is proven that under Assumption~\ref{ass} local minimizers of \eqref{eq:int} (with $f=0$) are locally bounded provided $\frac1p-\frac1q\leq \frac1{n-1}$ and this condition is sharp in view of  \cite[Theorem 6.1]{Mar91}. Combining Theorem~\ref{T:1} with the above mentioned results of \cite{BF,HS19}, we deduce that Assumption~\ref{ass} (together with some mild technical extra assumptions) with $1<p\leq q$ and
\begin{equation}\label{eq:pqnew}
\frac{q}p<1+\max\biggl\{\frac{2}{n-1},\min\biggl\{\frac{2}{p},\frac{p}{n-1-p}\biggr\}\biggr\}
\end{equation}
implies that local minimizer of \eqref{eq:int} with $f\equiv 0$ are locally Lipschitz (see also \cite{AT21} for related discussion). While it is not clear whether \eqref{eq:pqnew} is optimal, it strictly improves condition \eqref{eq:pq} for example in the cases $p=3$ and $n\geq5$. Let us mention that condition \eqref{eq:pq} also appears in \cite{S21} in the context of higher integrability results for vectorial problems (where Lipschitz regularity fails even in the case $p=q$, see \cite{SY02}). While also in that case bounded minimizer enjoy higher gradient integrability under the condition $q<p+2$, see \cite{CKP11}, this result cannot be combined with an a priori local boundedness result which fails in the vectorial case already for $p=q$.

For non-autonomous functionals, i.e., $\int_\Omega f(x,Du)\,dx$, rather precise sufficient \& necessary conditions are established in \cite{ELM04}, where the conditions on $p,q$ and $n$ have to be balanced with the (H\"older)-regularity in space of the integrand. Currently, regularity theory for non-autonomous integrands with non-standard growth, e.g.\ $p(x)$-Laplacian or double phase functionals, are a very active field of research, see, e.g.,\  the recent papers \cite{BCM18,BR20,BO20,CMMP21,CFK20,CM15,DM19,DM21,DM22,EMM19,HHT17,HO21,Koch,MaPa21} and \cite{BDS20,BGS21} for related results about the Lavrentiev phenomena. 


\section{Preliminaries}


\subsection{Preliminary lemmata}

A crucial technical ingredient in the proof of Theorem~\ref{T:1} is the following lemma which can be found in \cite[Lemma~3]{BS19c}.
\begin{lemma}[\cite{BS19c}]\label{L:optimcutoff}
Fix $n\geq2$. For given $0<\rho<\sigma<\infty$ and $v\in L^1(B_\sigma)$ consider
\begin{equation*}
 J(\rho,\sigma,v):=\inf\left\{\int_{B_\sigma}|v||\nabla \eta|^2\,dx \;|\;\eta\in C_0^1(B_\sigma),\,\eta\geq0,\,\eta=1\mbox{ in $B_\rho$}\right\}.
\end{equation*}
Then for every $\delta\in(0,1]$ 
\begin{equation}\label{1dmin}
 J(\rho,\sigma,v)\leq  (\sigma-\rho)^{-(1+\frac1\delta)} \biggl(\int_{\rho}^\sigma \left(\int_{S_r} |v|\,d\mathcal H^{n-1}\right)^\delta\,dr\biggr)^\frac1\delta,
\end{equation}
where
$$
S_r:=\{x\,:\,|x|=r\}
$$
\end{lemma}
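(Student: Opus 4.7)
The plan is to prove the inequality by restricting the infimum to \emph{radial} cutoffs and solving the resulting 1D optimization, and then applying a single Hölder inequality in the radial variable.

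First, I would parametrize: set $g(r):=\int_{S_r}|v|\,d\mathcal{H}^{n-1}$ for $r\in(\rho,\sigma)$, so that for any radial cutoff $\eta(x)=\phi(|x|)$ with $\phi\in C^1([0,\infty))$, $\phi(\rho)=1$, $\phi(\sigma)=0$, the coarea/Fubini identity gives
$$
\int_{B_\sigma}|v||\nabla\eta|^2\,dx=\int_\rho^\sigma \phi'(r)^2\,g(r)\,dr.
$$
Thus $J(\rho,\sigma,v)$ is bounded above by the infimum over such $\phi$ of the 1D energy on the right.

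Next, I would solve this 1D problem. The natural competitor is the one for which $\phi'(r)g(r)$ is constant on $(\rho,\sigma)$: writing $I:=\int_\rho^\sigma g(r)^{-1}\,dr$ (assume first $0<I<\infty$; otherwise regularize by replacing $g$ with $g+\varepsilon$ and passing to the limit, and mollify to obtain $C^1$ functions), set
$$
\phi(r):=\frac{1}{I}\int_r^\sigma \frac{ds}{g(s)}\quad \text{for } r\in[\rho,\sigma].
$$
Then $\phi(\rho)=1$, $\phi(\sigma)=0$, and $\phi'(r)=-\frac{1}{I\,g(r)}$, so a direct computation gives
$$
\int_\rho^\sigma \phi'(r)^2 g(r)\,dr=\frac{1}{I^2}\int_\rho^\sigma \frac{dr}{g(r)}=\frac{1}{I}=\biggl(\int_\rho^\sigma \frac{dr}{g(r)}\biggr)^{-1}.
$$
Therefore
$$
J(\rho,\sigma,v)\leq \biggl(\int_\rho^\sigma g(r)^{-1}\,dr\biggr)^{-1}.
$$

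Finally, I would convert this bound into the stated one via Hölder's inequality in $r$, with conjugate exponents $1+\delta$ and $\frac{1+\delta}{\delta}$. Splitting $1=g^{\delta/(1+\delta)}\cdot g^{-\delta/(1+\delta)}$ yields
$$
\sigma-\rho=\int_\rho^\sigma 1\,dr\leq \biggl(\int_\rho^\sigma g^\delta\,dr\biggr)^{\!1/(1+\delta)}\biggl(\int_\rho^\sigma g^{-1}\,dr\biggr)^{\!\delta/(1+\delta)}.
$$
Raising to the power $(1+\delta)/\delta$ and rearranging,
$$
\biggl(\int_\rho^\sigma g^{-1}\,dr\biggr)^{-1}\leq (\sigma-\rho)^{-(1+1/\delta)}\biggl(\int_\rho^\sigma g(r)^\delta\,dr\biggr)^{1/\delta},
$$
which combined with the previous display yields \eqref{1dmin}.

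The only real technical point is the admissibility of the chosen $\phi$: it need not be $C^1$ (let alone compactly supported in the open ball) for merely integrable $v$, and $1/g$ may fail to be integrable. Both issues are routine: extend $\phi$ by constants outside $[\rho,\sigma]$, mollify, and work with $g_\varepsilon:=g+\varepsilon$ to replace $I$ by $I_\varepsilon:=\int_\rho^\sigma g_\varepsilon^{-1}\,dr<\infty$; the resulting energy is $\int \phi'^2 g\,dr\leq \int \phi'^2 g_\varepsilon\,dr=I_\varepsilon^{-1}$, and passing $\varepsilon\downarrow 0$ preserves the Hölder bound above (with $g_\varepsilon$ in place of $g$) by monotone/dominated convergence, yielding \eqref{1dmin} as stated. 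This approximation bookkeeping is the one step that needs care; everything else is the radial reduction plus one Hölder application.
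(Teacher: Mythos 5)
Your proof is correct, and it follows essentially the same route as the source the paper cites for this lemma (\cite[Lemma~3]{BS19c}): reduce to radial cutoffs, take the explicit profile with $\phi'$ proportional to $1/g$ (after regularizing $g$ by $g+\varepsilon$) to bound $J$ by the harmonic-mean quantity $\bigl(\int_\rho^\sigma g^{-1}\,dr\bigr)^{-1}$, and then apply H\"older with exponents $1+\delta$ and $\tfrac{1+\delta}{\delta}$ to obtain \eqref{1dmin}. The approximation and mollification points you flag are indeed the only technical care needed, and your treatment of them is adequate.
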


Moreover, we recall here the following classical iteration lemma
\begin{lemma}[Lemma~6.1, \cite{Giu}]\label{L:holefilling}
Let $Z(t)$ be a bounded non-negative function in the interval $[\rho,\sigma]$. Assume that for every $\rho\leq s<t\leq \sigma$ it holds
\begin{equation*}
 Z(s)\leq \theta Z(t)+(t-s)^{-\alpha} A+B,
\end{equation*}
with $A,B\geq0$, $\alpha>0$ and $\theta\in[0,1)$. Then, there exists $c=c(\alpha,\theta)\in[1,\infty)$ such that
\begin{equation*}
 Z(s)\leq c((t-s)^{-\alpha} A+B).
\end{equation*}
\end{lemma}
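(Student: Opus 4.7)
The plan is a standard iteration of the given estimate on a geometrically shrinking sequence of intermediate points between $s$ and $t$. Fix $\rho \leq s < t \leq \sigma$ and a parameter $\tau \in (0,1)$ to be chosen at the end. I would define $t_0 := s$ and $t_{k+1} := t_k + (1-\tau)\tau^k(t-s)$, so that $t_k \uparrow t$ with controlled gaps $t_{k+1}-t_k = (1-\tau)\tau^k(t-s)$. Applying the hypothesis on each pair $(t_k,t_{k+1})$ yields
\begin{equation*}
Z(t_k) \leq \theta Z(t_{k+1}) + (1-\tau)^{-\alpha}\tau^{-k\alpha}(t-s)^{-\alpha} A + B.
\end{equation*}

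Iterating this recurrence from $k=0$ to $k=N-1$ gives
\begin{equation*}
Z(s) \leq \theta^N Z(t_N) + (1-\tau)^{-\alpha}(t-s)^{-\alpha} A \sum_{j=0}^{N-1}(\theta\tau^{-\alpha})^j + B\sum_{j=0}^{N-1}\theta^j.
\end{equation*}
The decisive step is the choice of $\tau$: picking any $\tau \in (\theta^{1/\alpha},1)$, which is a nonempty interval since $\theta \in [0,1)$, ensures $\theta\tau^{-\alpha} < 1$, so both geometric sums converge as $N\to\infty$ to constants depending only on $\alpha$ and $\theta$. Any such fixed $\tau$ will do, and one can just read off the constant $c(\alpha,\theta)$ from the resulting bound.

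Finally, to pass to the limit $N\to\infty$ one needs the remainder $\theta^N Z(t_N)$ to vanish; this is where the boundedness assumption on $Z$ is essentially used, since $\theta^N \to 0$ and $Z(t_N) \leq \sup_{[\rho,\sigma]} Z < \infty$. I do not anticipate any genuine obstacle: the scheme is completely algorithmic once the geometric subdivision is set up, and the only design choice is the ratio $\tau$, which must be tuned against $\theta$ and $\alpha$ so that the series weighted by $\theta^j\tau^{-j\alpha}$ actually converges. Note that the argument never uses continuity of $Z$ and never requires $A$, $B$ to be small, which matches the generality of the statement.
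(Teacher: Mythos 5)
Your argument is correct and is exactly the classical proof of this iteration lemma: the paper itself states it without proof, citing Lemma~6.1 of Giusti, where the same geometric subdivision $t_{k+1}=t_k+(1-\tau)\tau^k(t-s)$ with $\tau^\alpha>\theta$ and the use of boundedness of $Z$ to kill the remainder $\theta^N Z(t_N)$ appear verbatim. Nothing further is needed.
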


\subsection{Non-increasing rearrangement and Lorentz-spaces}

We recall the definition and useful properties of the non-increasing rearrangement $f^*$ of a measurable function $f$ and Lorentz spaces, see e.g.\ \cite[Section~22]{TatarBook}. For a measurable function $f:\R^n\to\R$, the non-increasing rearrangement is defined by
\begin{equation*}
 f^*(t):=\inf\{\sigma\in(0,\infty)\,:\,|\{x\in\R^n\,:\,|f(x)|>\sigma\}|\leq t\}.
\end{equation*}
Let $f:\R^n\to\R$ be a measurable function with ${\rm supp}f\subset \Omega$, then it holds for all $p\in[1,\infty)$
\begin{equation}\label{eq:ff*}
\int_\Omega |f(x)|^p\,dx=\int_0^{|\Omega|}(f^*(t))^p\,dt.
\end{equation}
A simple consequence of \eqref{eq:ff*} and the fact $f\leq g$ implies $f^*\leq g^*$ is the following inequality
\begin{equation}\label{est:omegat}
\sup_{|A|\leq t\atop A\subset\Omega}\int_A|f(x)|^p\leq \int_0^t(f_\Omega^*(t))^p\,dt,
\end{equation}
where $f_\Omega^*$ denotes the non-increasing rearrangement of $f_\Omega:=f\chi_\Omega$ (inequality \eqref{est:omegat} is in fact an \textit{equality} but for our purpose the upper bound suffices). 

The Lorentz space $L^{n,1}(\R^d)$ can be defined as the space of measurable functions $f:\R^n\to\R$ satisfying
\begin{equation*}
 \|f\|_{L^{n,1}(\R^n)}:=\int_0^\infty t^\frac1n f^*(t)\,\frac{dt}t<\infty.
\end{equation*}
Moreover, for $\Omega\subset\R^n$ and a measurable function $f:\R^n\to\R$, we set
\begin{equation*}
 \|f\|_{L^{n,1}(\Omega)}:=\int_0^{|\Omega|} t^\frac1n f_\Omega^*(t)\,\frac{dt}t<\infty,
\end{equation*}
where $f_\Omega$ is defined as above. Let us recall that $L^{n+\e}(\Omega)\subset L^{n,1}(\Omega)\subset L^n(\Omega)$ for every $\e>0$, where $L^{n,1}(\Omega)$ is the space of all measurable functions $f:\Omega\to\R$ satisfying $ \|f\|_{L^{n,1}(\Omega)}<\infty$ (here we identify $f$ with its extension by zero to $\R^n\setminus \Omega$). 


\section{Nonlinear iteration lemma and proof of Proposition~\ref{P:2}}\label{Sec:nonlineariteration}

In this section, we provide a nonlinear iteration lemma which will eventually be the main workhorse in the proof of Theorem~\ref{T:1}:

\begin{lemma}\label{L:basiciteration}
Let $B=B_R(x_0)\subset\R^n$, $n\geq3$, $\kappa\in(0,\frac12)$ and let $v\in W^{1,2}(B)\cap C(B)$ be non-negative and $f\in L^2(\R^n)$. Suppose there exists $ M_1\geq1$, $M_2,c_m>0$ and $k_0\geq0$ such that for all $k\geq k_0$ and for all $\eta\in C_c^1(B)$ with $\eta\geq0$ it holds
\begin{align}\label{L:basic:caccio}
\int_{B}|\nabla(v-k)_+|^2\eta^2\,dx\leq c_m^2M_1^2\int_{B}(v-k)_+^2|\nabla \eta|^2\,dx+c_m^2M_2^2\int_{B\cap\{v>k\}}\eta^2|f|^2.
\end{align}
Then there exist $c=c(c_m,n)\in[1,\infty)$ for $n\geq4$ and $c=c(c_m,\kappa)\in[1,\infty)$ for $n=3$ such that
\begin{equation}\label{L:basic:claim}
\|(v)_+\|_{L^\infty(\frac14 B)}\leq k_0+cM_1^{1+\max\{\kappa,\frac{n-3}2\}}\biggl(\fint_B (v-k_0)_+^2\,dx\biggr)^\frac12+cM_1^{\max\{\kappa,\frac{n-3}2\}}M_2 \|f\|_{L^{n,1}(B)}.
\end{equation}


\end{lemma}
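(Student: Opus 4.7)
The proof is a De Giorgi--type iteration on shrinking concentric balls and growing truncation levels, taking the Caccioppoli estimate \eqref{L:basic:caccio} as its engine. The key task is to track how the loss factor $M_1^2$ accumulates over the infinitely many iteration steps: a naive application of $n$-dimensional Sobolev would produce an exponent of order $n/2$ on $M_1$, whereas the sharpened exponent $(n-1)/2$ in \eqref{L:basic:claim} is obtained by using the optimal cutoff provided by Lemma~\ref{L:optimcutoff}. Heuristically this replaces the bulk Sobolev exponent $2^*=2n/(n-2)$ by the spherical one $2(n-1)/(n-3)$, reducing the dimensional loss by one.

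First I would fix $k\geq k_0$ and radii $\rho<\sigma\leq R$, take any $\eta\in C_c^1(B_\sigma)$ with $\eta\equiv 1$ on $B_\rho$ and $\eta\geq 0$, and combine \eqref{L:basic:caccio} with the Sobolev inequality applied to $w:=(v-k)_+\eta$:
$$
\Bigl(\int_{B_\sigma}w^{2^*}\,dx\Bigr)^{2/2^*}\lesssim M_1^2\int_{B_\sigma}(v-k)_+^2|\nabla\eta|^2\,dx+M_2^2\int_{B\cap\{v>k\}}\eta^2|f|^2\,dx.
$$
Taking the infimum of the first right-hand term over admissible $\eta$ and applying Lemma~\ref{L:optimcutoff} with parameter $\delta\in(0,1]$ converts the standard $(\sigma-\rho)^{-2}\int_{B_\sigma\setminus B_\rho}(v-k)_+^2$ bound into the sharper $(\sigma-\rho)^{-(1+1/\delta)}\bigl(\int_\rho^\sigma(\int_{S_r}(v-k)_+^2\,d\mathcal H^{n-1})^\delta\,dr\bigr)^{1/\delta}$. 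A H\"older step on each sphere $S_r$ against $|S_r\cap\{v>k\}|$, paired with Sobolev on $S_r$, then exploits the smallness of the superlevel set $A(k,\sigma):=B_\sigma\cap\{v>k\}$ with the sharp $(n-1)$-dimensional scaling.

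With this sharpened iteration step in hand I would set dyadic radii $\rho_j:=R/4+R\cdot 2^{-j-2}$ and levels $k_j:=k_0+K(1-2^{-j})$ for $K>0$ to be chosen, and track $Y_j:=\fint_{B_{\rho_j}}(v-k_j)_+^2\,dx$. Combining the step with $(k_{j+1}-k_j)^2|A(k_{j+1},\rho_{j+1})|\leq |B_{\rho_{j+1}}|Y_j$ produces a recursion of the form
$$
Y_{j+1}\leq C\gamma^j M_1^{2\alpha}K^{-2\beta}Y_j^{1+\beta}+(\text{forcing})_j,
$$
with $\alpha:=\max\{\kappa,(n-3)/2\}$ and some $\beta>0$. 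A standard fast-geometric-decay lemma then forces $Y_j\to 0$ as soon as $K\geq cM_1^{1+\alpha}Y_0^{1/2}+cM_1^{\alpha}M_2\|f\|_{L^{n,1}(B)}$, which is exactly \eqref{L:basic:claim}. The forcing contribution is extracted via \eqref{est:omegat}, $\int_{A(k_j,\rho_j)}|f|^2\leq\int_0^{|A(k_j,\rho_j)|}(f_B^*(t))^2\,dt$; Chebyshev gives $|A(k_j,\rho_j)|\lesssim Y_{j-1}/(k_j-k_{j-1})^2$, and a change of variables matched to the geometric level sequence telescopes the resulting sum into $\|f\|_{L^{n,1}(B)}^2$ via the definition $\|f\|_{L^{n,1}}=\int t^{1/n}f^*(t)\,dt/t$.

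The main obstacle will be producing \emph{exactly} the exponent $\alpha=(n-3)/2$ when $n\geq 4$: this requires choosing $\delta$ in Lemma~\ref{L:optimcutoff} so that, after H\"older and Sobolev on spheres, the effective gain per step corresponds to the critical spherical exponent $2(n-1)/(n-3)$ rather than the bulk $2n/(n-2)$, and tracking how the factor $M_1^2$ per step accumulates over the geometric series of exponents. For $n=3$ the spherical critical exponent is $\infty$ (endpoint $2$D Sobolev embedding fails), so one must instead use an $L^{2/(1-2\kappa)}$ substitute at the cost of a small $\kappa>0$ loss, which also explains why the constant $c=c(c_m,\kappa)$ blows up as $\kappa\to 0$.
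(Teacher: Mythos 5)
Your Steps 1--2 are essentially the paper's: combining the Caccioppoli inequality with the optimal cutoff of Lemma~\ref{L:optimcutoff} (with $\delta=2/2^*$) and the Sobolev inequality on spheres, with $2^*=\tfrac{2(n-1)}{n-3}$ for $n\geq4$ and a finite substitute $2+\tfrac2\kappa$ for $n=3$, is exactly how the exponent $\max\{\kappa,\tfrac{n-3}2\}$ is produced there. Two remarks on bookkeeping: the sharpened one-step estimate controls $\|\nabla(v-k)_+\|_{L^2(B_\rho)}$ in terms of the full $W^{1,2}$-norm of $(v-h)_+$ on the annulus (the spherical Sobolev inequality needs the tangential gradient), so the iterated quantity must be $J(t,r)=\|(v-t)_+\|_{W^{1,2}(B_r)}$ rather than your purely $L^2$ quantity $Y_j$; and the extra $+1$ in the exponent $1+\max\{\kappa,\tfrac{n-3}2\}$ comes from one final application of \eqref{L:basic:caccio} with a fixed cutoff to convert the $W^{1,2}$-norm back into the $L^2$ average. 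These are fixable, but they matter for getting the stated exponent.

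The genuine gap is in your Step 3, in the treatment of the forcing term. You prescribe the levels in advance, $k_j=k_0+K(1-2^{-j})$, and claim that Chebyshev plus ``a change of variables matched to the geometric level sequence'' telescopes $\sum_j\int_0^{|A(k_j,\rho_j)|}(f_B^*)^2\,dt$ into $\|f\|_{L^{n,1}(B)}^2$. This does not work: with fixed $K$ the increments $k_j-k_{j-1}=K2^{-j}$ shrink, so Chebyshev only gives $|A_j|\lesssim 4^jY_{j-1}/K^2$, and the only level-set information you can then insert is $\omega_f(t):=(\int_0^t(f_B^*)^2)^{1/2}\lesssim t^{\frac12-\frac1n}\|f\|_{L^{n,\infty}}$; the resulting forcing terms carry a factor growing geometrically in $j$ relative to the decay $\theta^jY_0$ you need, so the induction cannot close for any fixed $K$ --- consistent with the fact that $L^{n,\infty}$ data is not sufficient, so a scheme that only sees $f$ through such scale-wise bounds cannot yield the endpoint $L^{n,1}$ result, let alone the \emph{linear} dependence $cM_1^{\max\{\kappa,\frac{n-3}2\}}M_2\|f\|_{L^{n,1}(B)}$ in \eqref{L:basic:claim}. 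The paper resolves exactly this point by \emph{not} fixing the levels: the increment $\Delta_\ell$ is split into three parts, and the part $\Delta_\ell^{(2)}$ associated with $f$ is defined implicitly as the smallest value with $c_mM_2\,\omega_f\bigl(c_2J_{\ell-1}^{2^*_n}/(\Delta_\ell^{(2)})^{2^*_n}\bigr)\leq\tfrac13\tau^\ell J_0$; the boundedness of $\sum_\ell\Delta_\ell^{(2)}$ is then proved by an integral comparison and the substitution $s=\omega_f^{-1}(\cdot)$, using $\omega_f'(s)=\tfrac12\omega_f(s)^{-1}(f_B^*(s))^2$ and $\omega_f(s)\geq s^{1/2}f_B^*(s)$ to arrive at $\int_0^{\cdot}s^{\frac1n-1}f_B^*(s)\,ds=\|f\|_{L^{n,1}}$. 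This adaptive choice of the truncation levels is the key mechanism for the sharp Lorentz-space dependence, and it is precisely the ingredient your outline replaces with an unsubstantiated telescoping claim.
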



\begin{remark}\label{rem:Loptim}[Optimality]
The exponent in the factor $M_1^{1+\max\{\kappa,\frac{n-3}2\}}$ is optimal in dimensions $n\geq4$ and almost optimal for $n=3$. Indeed, consider $v(x):=x_n^2+1-\Lambda |x'|^2$, where $x=(x_1,\dots,x_{n-1},x_n)=:(x',x_n)$ which clearly satisfies
\begin{equation}\label{eq:counterexample}
-\nabla\cdot a\nabla v=0\qquad\mbox{where}\qquad a:={\rm diag}(1,\dots,1,(n-1)\Lambda).
\end{equation}
Hence, by classical computations (using the symmetry of $a$) $v$ satisfies the Caccioppoli inequality \eqref{L:basic:caccio} with
\begin{equation}\label{eq:remoptim}
c_m^2=4(n-1),\qquad M_1=\Lambda^\frac12,\qquad f\equiv0.
\end{equation}
Obviously, we have $\sup_{B_\frac14}\, v\geq v(0)\geq1$ and
\begin{align*}
\fint_{B_1}(v_+)^2\lesssim& \int_{-1}^1\int_0^{\Lambda^{-1/2}(1+x_n^2)^{1/2}}r^{n-2}(x_n^2+1)^2+\Lambda^2r^{n+2}\,dr\,dx_n\\
\lesssim&\Lambda^{-\frac{n-1}2}\int_{-1}^1(1+x_n^2)^{\frac{n+3}2}\lesssim \Lambda^{-\frac{n-1}2},
\end{align*}
where $\lesssim$ means $\leq$ up to a multiplicative constant depending only on $n$. In particular, we have
$$
\frac{\|v_+\|_{L^\infty(B_\frac14)}}{\|v_+\|_{L^2(B_1)}}\geq c(n)\Lambda^{\frac{n-1}{4}}\stackrel{\eqref{eq:remoptim}}=c(n)M_1^{\frac{n-1}{2}}
$$
which matches exactly the scaling in \eqref{L:basic:claim} for $n\geq4$ and almost matches in the case $n=3$. We mention that $-(v)_+$ appeared already in \cite{connor} in the context of optimal dependencies in Krylov-Safanov estimates.
\end{remark}

\begin{remark} Lemma~\ref{L:basiciteration} should be compared with \cite[Lemma~3.1]{BM20}, where starting from a similar Caccioppoli inequality as \eqref{L:basiciteration} a pointwise bound for $v$ in terms of the $L^2$-norm $v$ and the Riesz potential of $f$ is deduced. The main improvement of Lemma~\ref{L:basiciteration} compared to \cite[Lemma~3.1]{BM20} lies in the dependence of $M_1$ (from $M_1^{\max\{\kappa,\frac{n-2}2\}}$ in \cite{BM20} to $M_1^{\max\{\kappa,\frac{n-3}2\}}$) which in view of Remark~\ref{rem:Loptim} is essentially optimal. Let us remark that variants of \cite[Lemma~3.1]{BM20} also play an important role in \cite{DM21,DM22} where non-autonomus functionals are considered.
\end{remark}

Before, we prove Lemma~\ref{L:basiciteration} let us observe that Proposition~\ref{P:2} is implied by Lemma~\ref{L:basiciteration}.

\begin{proof}[Proof of Proposition~\ref{P:2}]
By density arguments, we are allowed to use $\varphi=\eta^2(v-k)_+$, $k\geq0$ and $\eta\in C_c^1(B)$ with $\eta\geq0$ in \eqref{def:subsolution} which implies
\begin{equation*}
 \int_{B}a\nabla (u-k)_+\cdot \nabla (v-k)_+\eta^2\,dx\leq-2\int_Ba\nabla (v-k)_+\cdot (v-k)_+\eta\nabla \eta\,dx.
\end{equation*}
Assumption \eqref{def:ellipt} combined with Cauchy-Schwarz and Youngs inequality imply \eqref{L:basic:caccio} with $c_m^2=4$ and $M_1^2=\frac{\Lambda}\nu$. The claimed estimate \eqref{est:P1} now follows directly from Lemma~\ref{L:basiciteration} and the claimed optimality is a consequence of the discussion in Remark~\ref{rem:Loptim}.

\end{proof}

\begin{proof}[Proof of Lemma~\ref{L:basiciteration}]

Without loss of generality, we only consider the case $B=B_2$.

Throughout the proof we set
\begin{equation}\label{def:2star}
2^*=2^*(n,\kappa):=\begin{cases}\frac{2(n-1)}{n-3}&\mbox{if $n\geq4$}\\2+\frac2\kappa&\mbox{if $n=3$}\end{cases}.
\end{equation}

\step 1 Optimization Argument. We claim that there exists $c_1=c_1(n,2^*)\in[1,\infty)$ such that for all $k>h\geq0$ and all $\frac12\leq \rho<\sigma\leq 1$
\begin{equation}\label{L:basiciteratio:S1claim}
\|\nabla (v-k)_+\|_{L^2(B_\rho)}\leq \frac{c_1c_m M_1}{(\sigma-\rho)^{\alpha}} \frac{\norm{(v-h)_+}_{W^{1,2}(B_\sigma\setminus B_\rho)}^{\frac{2^*}2}}{(k-h)^{\frac{2^*}2-1}}+c_m M_2\omega_f(|A_{k,\sigma}|),
\end{equation}
where $\alpha=\frac12+\frac{2^*}4>0$, 
\begin{equation}\label{def:Alr}
A_{l,r}:=B_r\cap \{x\in\Omega\, :\,v(x)>l\}\qquad\mbox{for all $r>0$ and $l>0$,}
\end{equation}
and $\omega_f:[0,|B|]\to[0,\infty)$ is defined by
\begin{equation}\label{def:omegaf}
 \omega_f(t):=\biggl(\int_0^t((f\chi_B)^*(s))^2\,ds\biggr)^\frac12.
\end{equation}
We optimize the right-hand side of \eqref{L:basic:caccio} with respect to $\eta$ satisfying $\eta\in C_0^1(B_\sigma)$ and $\eta=1$ in $B_\rho$: we use Lemma~\ref{L:optimcutoff} and $1\leq\frac{v(x)-h}{k-h}$ whenever $v(x)\geq k$ in the form
\begin{align}\label{est:opteta}
    &\inf_{\eta\in\mathcal A_{\rho,\sigma}}\int_{B_1}|\nabla \eta|^2(v-k)_+^2\,dx\notag\\
    \leq&\frac1{(\sigma-\rho)^{1+\frac1\delta}}\biggl(\int_\rho^\sigma \biggl(\int_{S_r\cap A_{k,\sigma}}(v-k)_+^2\,d\mathcal H^{n-1}\biggr)^\delta\,dr\biggr)^\frac1\delta\notag\\
    \leq&\frac1{(\sigma-\rho)^{1+\frac1\delta}}\frac1{(k-h)^{2^*-2}}\biggl(\int_\rho^\sigma \biggl(\int_{S_r\cap A_{k,\sigma}}(v-h)_+^{2^*}\,d\mathcal H^{n-1}\biggr)^\delta\,dr\biggr)^\frac1\delta
\end{align}
for every $\delta>0$. Appealing to Sobolev inequality on spheres, we find $c=c(n,2^*)\in[1,\infty)$ such that for almost every $r\in[\rho,\sigma]$ it holds
\begin{equation}\label{est:sobsphere}
    \biggl(\int_{S_r}(v-h)_+^{2^*}\,d\mathcal H^{n-1}\biggr)^\frac{1}{2^*}\leq c\biggl(\int_{S_r}(v-h)_+^2+|\nabla (v-h)_+|^2\,d\mathcal H^{n-1}\biggr)^\frac{1}{2}.
\end{equation}
Inserting \eqref{est:sobsphere} in \eqref{est:opteta} with $\delta=\frac{2}{2^*}$, we obtain
\begin{equation}\label{est:opteta1}
    \inf_{\eta\in\mathcal A_{\rho,\sigma}}\int_{B_1}(v-k)_+^2|\nabla \eta|^2\,dx\leq \frac{c}{(\sigma-\rho)^{2\alpha}} \frac{\norm{(v-h)_+}_{W^{1,2}(B_\sigma\setminus B_\rho)}^{2^*-2}}{(k-h)^{2^*-2}}\norm{(v-h)_+}_{W^{1,2}(B_\sigma\setminus B_\rho)}^2
\end{equation}
The claim \eqref{L:basiciteratio:S1claim} follows from \eqref{L:basic:caccio} and \eqref{est:opteta1} combined with
\begin{equation*}
\int_{B_1\cap\{v>k\}}\eta^2|f|^2\,dx\leq \sup\biggl\{\int_A |f|^2\,dx\,:\,|A| \leq |A_{k,\sigma}|,\, A\subset B_1\biggr\}\stackrel{\eqref{est:omegat}}{\leq}\int_0^{|A_{k,\sigma}|}((f\chi_B)^*(s))^2\,ds
\end{equation*}
and the definition of $\omega_f$, see \eqref{def:omegaf}.

\step 2 One-Step improvement.

We claim that there exists $c_2=c_2(n)\in[1,\infty)$ such that
\begin{equation}\label{est:onestepL}
J(k,\rho)\leq \frac{c_1c_m M_1J(h,\sigma)^{\frac{2^*}2-1}}{(k-h)^{\frac{2^*}{2}-1}}\frac{J(h,\sigma)}{(\sigma-\rho)^\alpha}+c_mM_2\omega_{f}\biggl(\frac{c_2 J(h,\sigma)^{2_n^*}}{(k-h)^{2_n^*}}\biggr)+\frac{c_2J(h,\sigma)^{1+\frac{2_n^*}n}}{(k-h)^\frac{2_n^*}n}
\end{equation}
where $2_n^*:=\frac{2n}{n-2}$, 
\begin{equation}
J(t,r):=\|(v-t)_+\|_{W^{1,2}(B_r)}\qquad \forall t\geq0,\, r\in(0,1]
\end{equation}
and $\omega_f$ is defined in \eqref{def:omegaf}. Note that $k-h<v-h$ on $A_{k,r}$ for every $r>0$ and thus with help of Sobolev inequality (this time on the $n$-dimensional ball $B_\sigma$)
\begin{equation}\label{est:Aksigma}
|A_{k,\sigma}|\leq \int_{A_{k,\sigma}}\biggl(\frac{v(x)-h}{k-h}\biggr)^{2_n^*}\,dx\leq \frac{\|(v-h)_+\|_{L^{2_n^*}(B_\sigma)}^{2_n^*}}{(k-h)^{2_n^*}}\leq c_2 \frac{J(h,\sigma)^{2_n^*}}{(k-h)^{2_n^*}},
\end{equation}
where $c_2=c_2(n)\in[1,\infty)$. Combining \eqref{est:Aksigma} with \eqref{L:basiciteratio:S1claim}, we obtain
\begin{equation}\label{est:onestep1}
\|\nabla (v-k)_+\|_{L^2(B_\rho)}\leq \frac{c_1c_m M_1}{(\sigma-\rho)^{\alpha}} \frac{J(h,\sigma)^{\frac{2^*}2}}{(k-h)^{\frac{2^*}2-1}}+c_m M_2\omega_{f}\biggl(\frac{c_2 J(h,\sigma)^{2_n^*}}{(k-h)^{2_n^*}}\biggr).
\end{equation}
It remains to estimate $\|(v-k)_+\|_{L^2(B_\rho)}$: A combination of H\"older inequality, Sobolev inequality and \eqref{est:Aksigma} yield
\begin{align}\label{est:onestep2}
 \| (v-k)_+\|_{L^2(B_\rho)}\leq \|(v-h)_+\|_{L^{2_n^*}(B_\sigma)}|A_{k,\sigma}|^{\frac{1}{n}}\leq c(n)  \frac{J(h,\sigma)^{1+\frac{2_n^*}n}}{(k-h)^\frac{2_n^*}n}.
 \end{align}
Combining \eqref{est:onestep1} and \eqref{est:onestep2}, we obtain \eqref{est:onestepL}.

\step 3 Iteration

For $k_0\geq0$ and a sequence $(\Delta_\ell)_{\ell\in\mathbb N}\subset [0,\infty)$ specified below, we set
\begin{equation}\label{def:kell}
 k_\ell:=k_0+\sum_{i=1}^\ell\Delta_i,\quad \sigma_\ell=\frac12+\frac1{2^{\ell+1}}.
\end{equation}
For every $\ell\in\mathbb N\cup\{0\}$, we set $J_\ell:=J(k_\ell,\sigma_\ell)$. From \eqref{est:onestepL}, we deduce for every $\ell\in\mathbb N$
\begin{align}\label{est:iteration}
J_\ell\leq c_1c_m M_12^{(\ell+1)\alpha}\biggl(\frac{J_{\ell-1}}{\Delta_\ell}\biggr)^{\frac{2^*}2-1}J_{\ell-1} +c_m M_2\omega_f\biggl(\frac{c_2 J_{\ell-1}^{2_n^*}}{(\Delta_\ell)^{2_n^*}}\biggr)+c_2\biggl(\frac{J_{\ell-1}}{\Delta_\ell}\biggr)^{\frac{2_n^*}n}J_{\ell-1},
\end{align}
where $c_1$ and $c_2$ are as in Step~2. Fix $\tau=\tau(n,\kappa)\in(0,\frac12)$ such that
\begin{equation}\label{def:tau}
(2\tau)^{\frac{2^*}2-1}=2^{-\alpha}.
\end{equation}
%
We claim that we can choose $\{\Delta_\ell\}_{\ell\in\mathbb N}$ satisfying
\begin{equation}\label{est:sumdeltaell}
\sum_{\ell\in\mathbb N}\Delta_\ell<\infty
\end{equation}
in such a way that
\begin{equation}\label{ass:iterationJell}
 J_\ell\leq \tau^\ell J_0\qquad\mbox{for all $\ell\in\mathbb N\cup\{0\}$}.
\end{equation}
Obviously, \eqref{def:kell}, \eqref{ass:iterationJell} and $\tau\in(0,1)$ yield boundedness of $v$ in $B_\frac12$.

For $\ell\in\mathbb N$, we set $\Delta_\ell=\Delta_\ell^{(1)}+\Delta_\ell^{(2)}+\Delta_\ell^{(3)}$ with
\begin{equation}\label{def:deltaell}
\Delta_\ell^{(1)}=\biggl(2^\alpha 3c_1 c_m M_1 \tau^{-(\frac{2^*}2-1)-1}\biggr)^\frac1{\frac{2^*}2-1}J_02^{-\ell},\qquad \Delta_\ell^{(3)}=\biggl(\frac{3 c_2}\tau\biggr)^{\frac{n}{2_n^*}}\tau^{\ell-1}J_0 
\end{equation}
and $\Delta_\ell^{(2)}$ being the smallest value such that
\begin{align}\label{def:deltaell1}
 c_m M_2\omega_f\biggl(\frac{c_2 J_{\ell-1}^{2_n^*}}{(\Delta_\ell^{(2)})^{2_n^*}}\biggr)\leq \frac13\tau^\ell J_0
\end{align}
is valid. The choice of $\tau$ (see \eqref{def:tau}), $\Delta_\ell$ and estimate \eqref{est:iteration} combined with a straightforward induction argument yield \eqref{ass:iterationJell}. Indeed, assuming $J_{\ell-1}\leq \tau^{\ell-1}J_0$, we obtain
\begin{align*}
J_\ell\leq& \frac{c_1c_m M_12^{(\ell+1)\alpha}}{2^\alpha 3c_1 c_m M_1 \tau^{-(\frac{2^*}2-1)-1}}\biggl(\tau^{\ell-1}2^{\ell}\biggr)^{\frac{2^*}2-1}\tau^{\ell-1}J_0+\frac13\tau^\ell J_0+\frac{c_2\tau}{3c_3}\tau^{\ell-1}J_0\\
=&\frac13\tau 2^{\alpha \ell}\biggl((2\tau)^\ell\biggr)^{\frac{2^*}2-1}\tau^{\ell-1}J_0+\frac23\tau^\ell J_0\stackrel{\eqref{def:tau}}=\tau^\ell J_0.
\end{align*}

Using $\sum_{\ell\in\mathbb N}(2^{-\ell}+\tau^\ell)<\infty$, we deduce from \eqref{def:deltaell1} and \eqref{def:deltaell}
\begin{align}\label{sum:deltaell}
 \sum_{\ell\in\mathbb N}\Delta_\ell \leq \sum_{\ell\in\mathbb N}\frac{c_2^{\frac1{2_n^*}}\tau^{\ell-1}J_0}{(\omega_f^{-1}(\frac{\tau^\ell J_0}{3c_m M_2}))^{\frac1{2_n^*}}}+c(1+M_1^\frac{1}{\frac{2^*}2-1}) J_0,
\end{align}
where $c=c(n,\kappa,c_m)\in[1,\infty)$. Next, we show that $f\in L^{n,1}(B_1)$ ensures that the first term on the right-hand side of \eqref{sum:deltaell} is bounded and thus \eqref{est:sumdeltaell} is valid. Indeed,
\begin{align}\label{sum:omega1}
\sum_{\ell\in\mathbb N}\frac{\tau^{\ell}J_0}{(\omega_f^{-1}(\frac{\tau^\ell J_0}{3c_m M_2}))^{\frac12-\frac1n}}\leq& \frac1\tau\int_1^\infty \frac{\tau^x J_0}{(\omega_f^{-1}(\frac{\tau^xJ_0}{3c_mM_2}))^{\frac1{2}-\frac1n}}\,dx\notag\\
=&\frac1{\tau|\log \tau|}\int_0^\tau\frac{tJ_0}{(\omega_f^{-1}(\frac{tJ_0}{3c_m M_2}))^{\frac12-\frac1n}}\,\frac{dt}t\notag\\
\leq&\frac{3c_mM_2}{\tau|\log\tau|}\int_0^{\omega_f^{-1}({\frac{\tau J_0}{3c_m M_2}})} \frac{\omega_f'(s)}{s^{\frac12-\frac1n}}\,ds.
\end{align}
Recall $\omega_f(t)=(\int_0^t((f\chi_{B_1})^*(s))^2\,ds)^\frac12$ and thus $\omega_f'(t)=\frac12 \frac1{\omega_f(t)}(f\chi_{B_1})^*(t))^2$. Since $(f\chi_{B_1})^*$ is non-increasing, we have $\omega_f(t)\geq t^\frac12(f\chi_{B_1})^*(t)$ and 
\begin{align}\label{sum:omega2}
\int_0^{\omega^{-1}({\frac{\tau J_0}{3c_mM_2}})} \frac{\omega_f'(s)}{s^{\frac12-\frac1n}}\,ds
\leq&\frac12\int_0^{\infty}s^{\frac1n}(f\chi_{B_1})^*(s)\,\frac{ds}s=\frac12\|f\|_{L^{n,1}(B_1)}.
\end{align}
Notice that \eqref{ass:iterationJell} implies
\begin{equation*}
\|(v-k_0-\sum_{\ell\in\mathbb N}\Delta_\ell)_+\|_{L^2(B_\frac12)}=0\quad \Rightarrow\quad \sup_{B_\frac12}v\leq k_0+\sum_{\ell\in\mathbb N}\Delta_\ell.
\end{equation*}
%
Hence, appealing to \eqref{sum:deltaell}-\eqref{sum:omega2} and $M_1\geq1$, we find $c=c(c_m,n,\kappa)\in[1,\infty)$ such that
\begin{equation*}
 \sup_{B_\frac12}v\leq k_0+ cM_1^\frac{1}{\frac{2^*}2-1}\|(u-k_0)_+\|_{W^{1,2}(B_1)}+cc_m M_2\|f\|_{L^{n,1}(B_1)}.
\end{equation*}
The claimed estimate \eqref{L:basic:claim} with $B=B_2$, follows by the definition of $2^*$, see \eqref{def:2star} and a further application of \eqref{L:basic:caccio} with $\eta\in C_c^1(B_2)$ with $\eta=1$ in $B_1$ with $|\nabla \eta|\lesssim1$.
%

\end{proof}

%

\section{A priori estimate for regularized integrand}

In this section, we derive a priori estimates for regular weak solutions $u\in W^{1,\infty}_{\rm loc}(B)$ of the equation
\begin{equation}\label{eq:aeps}
-\nabla \cdot \bfa_\e(\nabla u)=f\qquad \mbox{in $B\subset\R^n$, $f\in L^\infty(\R^n)$,}
\end{equation}
where $\bfa_\e:\R^n\to\R^n$ is (a possibly regularized version of) $\partial F$ and satisfies
\begin{assumption}\label{ass:reg}
Let $0<\nu\leq \Lambda<\infty$, $1<p\leq q<\infty$, $\mu\in[0,1]$ and $\e,T>0$ be given. Suppose that $\bfa_\e\in C^1_{\rm loc}(\R^n,\R^n)$ satisfies
\begin{equation}\label{ass:bfaeps}
\begin{cases}
\partial_i (\bfa_\e)_j(z)=\partial_j (\bfa_\e)_i(z)&\mbox{for all $z\in\R^n$, and $i,j\in\{1,\dots,n\}$}\\
g_{1}(|z|)|\xi|^2\leq \langle\partial \bfa_\e(z)\xi,\xi\rangle \leq g_{2,\e}(|z|)|\xi|^2&\mbox{for all $z,\xi\in\R^n$ with $|z|\geq T$}\\
\mbox{$\partial\bfa_\e(z)$ is strictly positive definite}&\mbox{for all $z\in\R^n$ with $|z|\leq 2T$}
\end{cases}
\end{equation}
where for all $s\geq0$ 
%
%
\begin{align*}
g_1(s):=\nu(\mu^2+s^2)^{\frac{p-2}2},\qquad g_{2,\e}(s):=\Lambda(\mu^2+s^2)^{\frac{p-2}2}+\Lambda (\mu^2+s^2)^{\frac{q-2}2}+\e\Lambda(1+s^2)^\frac{\min\{p-2,0\}}2
\end{align*}

\end{assumption}

%
Moreover, we introduce (following \cite{BM20}) the quantity
\begin{equation}
G_T(t):=\int_T^{\max\{t,T\}} g_1(s)s\,ds\qquad (T>0).
\end{equation}
\begin{lemma}[Caccioppoli inequality]\label{L:caccio}
Suppose Assumption~\ref{ass:reg} is satisfied for some $\e,T>0$. There exists $c=c(n)\in[1,\infty)$ such that the following is true: Let $u\in W^{1,\infty}_{\rm loc}(B)$ be a weak solution to \eqref{eq:aeps}. Then it holds for all $\eta\in C_c^1(B)$ 
\begin{align}\label{eq:caccioaprio}
\int_{B}|\nabla((G_T(|\nabla u|)-k)_+)|^2\eta^2\,dx\leq& c\int_{B}\frac{g_{2,\e}(|\nabla u|)}{g_1(|\nabla u|)}(G_T(|\nabla u|)-k)_+^2|\nabla \eta|^2\,dx\notag\\
&+c\int_{B\cap\{G_T(|\nabla u|)>k\}}\eta^2|\nabla u|^2 |f|^2\
\end{align}
\end{lemma}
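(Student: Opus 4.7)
The plan is to differentiate the equation $-\nabla\cdot\bfa_\e(\nabla u)=f$ tangentially, test with a function built from $\partial_s u$ and $w:=(G_T(|\nabla u|)-k)_+$, and rely on the symmetry of $\partial\bfa_\e$ together with the pointwise identity $\sum_s\partial_s u\,\nabla\partial_s u=\tfrac12\nabla|\nabla u|^2=|\nabla u|\,\nabla|\nabla u|$. Before starting, I would upgrade $u$ to $W^{2,2}_{\rm loc}(B)$: since $\nabla u\in L^\infty_{\rm loc}$ and $\partial\bfa_\e$ is strictly positive definite on bounded sets (Assumption~\ref{ass:reg}), a standard difference-quotient argument with $f\in L^\infty$ gives the required second-order integrability and legitimizes the manipulations below.

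Fix $\eta\in C_c^1(B)$ and, for each $s\in\{1,\dots,n\}$, use $-\partial_s\psi_s$ with $\psi_s:=\eta^2\,\partial_s u\,w$ as test function in the weak formulation (after a standard mollification). Integration by parts in the $s$-variable, summing over $s$, and writing $A:=\partial\bfa_\e(\nabla u)$, yields
\begin{equation*}
I_1+I_2+I_3=-\int_B f\Bigl(2\eta w\,\nabla\eta\cdot\nabla u+\eta^2 w\,\Delta u+\eta^2\,\nabla u\cdot\nabla w\Bigr),
\end{equation*}
with $I_3:=\int_B\eta^2\,A(|\nabla u|\nabla|\nabla u|)\cdot\nabla w$, $I_2:=\int_B w\eta^2\sum_s A\nabla\partial_s u\cdot\nabla\partial_s u$, and $I_1:=2\int_B w\eta\,\nabla\eta\cdot A(|\nabla u|\nabla|\nabla u|)$. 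Since $\nabla w=g_1(|\nabla u|)\,|\nabla u|\,\nabla|\nabla u|$ on $\{w>0\}$, the ellipticity lower bound gives $I_3\geq\int\eta^2|\nabla w|^2$ and $I_2\geq\int w\eta^2\,g_1\,|\nabla^2u|^2$; the mixed term is estimated by Cauchy-Schwarz in the $A$-bilinear form with a $\sqrt{g_1}$ weight, yielding $|I_1|\leq\lambda I_3+\frac{C}{\lambda}\int (g_{2,\e}/g_1)\,w^2\,|\nabla\eta|^2$, which is exactly where the ellipticity ratio in the claim appears.

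The first and third summands on the right-hand side are routinely absorbed by Young's inequality into $\int_{\{w>0\}}\eta^2|\nabla u|^2|f|^2$, $\int w^2|\nabla\eta|^2$, and a small fraction of $I_3$. The main obstacle is the middle term containing $\Delta u$: it rules out controlling everything with only $|\nabla w|^2$ and forces us to keep the \emph{full} $I_2$ contribution (genuine second derivatives, not only $\nabla w$). Using $|\Delta u|\leq\sqrt{n}|\nabla^2u|$ and Young, one obtains $\lambda'I_2+C_{\lambda'}\int\eta^2\,(w/g_1)\,|f|^2$; the second factor is then converted into $\int_{\{w>0\}}\eta^2|\nabla u|^2|f|^2$ via the calculus inequality
\begin{equation*}
G_T(t)\leq C(p)\,g_1(t)\,t^2\qquad\text{for all }t\geq T,
\end{equation*}
which is a direct consequence of $\tfrac{d}{ds}\bigl(g_1(s)s\bigr)=\nu(\mu^2+s^2)^{(p-4)/2}\bigl((p-1)s^2+\mu^2\bigr)>0$, valid precisely because $p>1$ in Assumption~\ref{ass:reg}. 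Choosing $\lambda,\lambda'$ sufficiently small to absorb $\lambda I_3$ and $\lambda'I_2$ on the left yields \eqref{eq:caccioaprio}.
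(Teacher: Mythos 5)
Your argument is correct and follows essentially the same route as the paper: testing the (tangentially differentiated) equation with $\eta^2(G_T(|\nabla u|)-k)_+\partial_s u$, using $\nabla G_T(|\nabla u|)=g_1(|\nabla u|)\sum_s\partial_s u\,\nabla\partial_s u$ together with symmetry and Cauchy--Schwarz for the $\partial\bfa_\e$-bilinear form to produce the $g_{2,\e}/g_1$ factor, retaining the full second-derivative term to absorb the $f\,\Delta u$ contribution, and converting $g_1^{-1}(G_T(|\nabla u|)-k)_+\leq|\nabla u|^2$ via the monotonicity of $s\mapsto g_1(s)s$. Your variants (testing the original equation with $-\partial_s\psi_s$ instead of the differentiated one, and the difference-quotient justification of $W^{2,2}_{\rm loc}$) are only cosmetic differences from the paper's proof.
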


The proof of Lemma~\ref{L:caccio} follows almost verbatim the proof of \cite[Lemma 4.5]{BM20} where \eqref{eq:caccioaprio} is proven for a specific choice of $\eta$. 

\begin{proof}

The following computations are essentially a translation of the proof of \cite[Lemma 4.5]{BM20} to the present situation.

\step 0 Preliminaries.

Since we suppose that $u$ is Lipschitz continuous and $\bfa_\e\in C_{\rm loc}^1(\R^n,\R^n)$ satisfies the uniform estimate $\langle\partial \bfa_\e(z)\xi,\xi\rangle\geq\nu_\e|\xi|^2$ for some $\nu_\e>0$ for all $z,\xi\in\R^n$ (which follows from \eqref{ass:bfaeps}), we obtain from standard regularity theory that

\begin{equation*}
u\in W_{\rm loc}^{2,2}(B),\quad u\in C_{\rm loc}^{1,\alpha}(B)\,\mbox{for some $\alpha\in(0,1)$},\quad \bfa_\e(\nabla u)\in W_{\rm loc}^{1,2}(B,\R^n).
\end{equation*} 
Hence, we can differentiate equation \eqref{eq:aeps} and obtain for all $s\in\{1,\dots,n\}$ and every $\varphi\in W_0^{1,2}(B)$ with compact support in $B$ that
\begin{equation}\label{L:aprio:eulerdiffesp}
\int_{B}\langle \partial \bfa_\e(\nabla u)\nabla \partial_s u,\nabla \varphi\rangle \,dx=-\int_B f\partial_s \varphi\,dx.
\end{equation}
We test \eqref{L:aprio:eulerdiffesp} with 
$$
\varphi:=\varphi_s:=\eta^2 (G_T(|\nabla u|)-k)_+\partial_s u,\qquad k\geq0,\quad\eta\in C_c^1(B).
$$
On the set $\{G_T(|\nabla u|)>k\}$ holds
\begin{equation}\label{L:caccio:nablaphis}
\nabla \varphi_s=\eta^2 (G_T(|\nabla u|)-k)_+\nabla \partial_s u+\eta^2 \partial_s u\nabla (G_T(|\nabla u|)-k)_++2\eta (G_T(|\nabla u|)-k)_+\partial_s u\nabla \eta
\end{equation}
and
\begin{equation}\label{eq:GTg1}
\nabla (G_T(|\nabla u|-k)_+=\nabla (G_T(|\nabla u|))=g_1(|\nabla u|)\sum_{s=1}^n\partial_s u\nabla \partial_s u\quad\mbox{and}\quad g_1(|\nabla u|)>0.
\end{equation}
We compute,
\begin{align*}
&\sum_{s=1}^n \int_B\langle\partial \bfa_\e(\nabla u)\nabla \partial_s u,\nabla \partial_s u\rangle (G_T(|\nabla u|)-k)_+\eta^2\,dx\\
&+\int_B g_1(|\nabla u|)^{-1}\langle \partial \bfa_\e(\nabla u)\nabla (G_T(|\nabla u|)-k)_+,\nabla (G_T(|\nabla u|)-k)_+)\rangle \eta^2\,dx\\
\stackrel{\eqref{eq:GTg1}}{=}&\sum_{s=1}^n \int_B\langle\partial \bfa_\e(\nabla u)\nabla \partial_s u,(G_T(|\nabla u|)-k)_+\nabla \partial_s u\rangle \eta^2\,dx\\
&+\sum_{s=1}^n\int_B \langle \partial \bfa_\e(\nabla u)\nabla \partial_s u,(\partial_s u)\nabla (G_T(|\nabla u|)-k)_+)\rangle \eta^2\,dx\\
\stackrel{\eqref{L:aprio:eulerdiffesp}}=&-2\sum_{s=1}^n \int_B\langle\partial \bfa_\e(\nabla u)\nabla \partial_s u,\nabla \eta\rangle (\partial_s u)(G_T(|\nabla u|)-k)_+\eta\,dx-\sum_{s=1}^n\int_B f\partial_s\varphi_s\,dx\\
\stackrel{\eqref{eq:GTg1}}=&-2 \int_Bg_1(|\nabla u|)^{-1}\langle\partial\bfa_\e(\nabla u)\nabla((G_T(|\nabla u|)-k)_+),\nabla \eta\rangle (G_T(|\nabla u|)-k)_+\eta\,dx-\sum_{s=1}^n\int_B f\partial_s\varphi_s\,dx.
\end{align*}
The symmetry of $\partial \bfa_\e$ and Cauchy-Schwarz inequality yield
\begin{align*}
&-2 \int_Bg_1(|\nabla u|)^{-1}\langle\partial\bfa_\e(\nabla u)\nabla((G_T(|\nabla u|)-k)_+),\nabla \eta\rangle (G_T(|\nabla u|)-k)_+\eta\,dx\\
\leq&\frac12\int_Bg_1(|\nabla u|)^{-1}\langle\partial\bfa_\e(\nabla u)\nabla((G_T(|\nabla u|)-k)_+),\nabla((G_T(|\nabla u|)-k)_+)\rangle\eta^2\,dx\\
&+2\int_Bg_1(|\nabla u|)^{-1}\langle\partial\bfa_\e(\nabla u)\nabla\eta,\nabla\eta\rangle((G_T(|\nabla u|)-k)_+)^2\,dx.
\end{align*}
The previous two (in)equalities combined with \eqref{ass:bfaeps} imply the existence of $c=c(n)<\infty$ such that
\begin{align}\label{L:caccio:almostfinal1}
&\int_B\biggl(g_1(|\nabla u|)(G_T(|\nabla u|)-k)_+|\nabla^2 u|^2+|\nabla (G_T(|\nabla u|)-k)_+|^2\biggr)\eta^2\,dx\notag\\
\leq&c\int_B\biggl(\frac{g_{2,\e}(\nabla u)}{g_1(|\nabla u|)}\biggr)(G_T(|\nabla u|)-k)_+^2|\nabla \eta|^2\,dx+c\sum_{s=1}^n\int_B|f||\nabla \varphi_s|\,dx.
\end{align}
Appealing to Youngs inequality and \eqref{L:caccio:nablaphis}, we estimate the last term on the right-hand side by
\begin{align}\label{L:caccio:almostfinal2}
&\sum_{s=1}^n\int_B|f||\nabla \varphi_s|\,dx\notag\\
\leq&\frac12 \int_B \biggl(g_1(|\nabla u|)(G_T(|\nabla u|)-k)_+|\nabla^2 u|^2+|\nabla (G_T(|\nabla u|)-k)_+|^2\biggr)\eta^2\,dx\notag\\
&+c\int_{B}((G_T(|\nabla u|)-k)_+)^2|\nabla \eta|^2\,dx\notag\\
&+c\int_{B\cap \{G_T(|\nabla u|)>k\}}|f|^2\big(g_1(|\nabla u|)^{-1}(G_T(|\nabla u|)-k)_++|\nabla u|^2\big)\eta^2\,dx,
\end{align}
where $c=c(n)\in[1,\infty)$. The monotonicity of $s\mapsto g_1(s)s$ yields $G_T(t)\leq g_1(t)t(t-T)\leq g_1(t)t^2$ and thus
\begin{align}\label{L:caccio:almostfinal3}
\int_{B\cap \{G_T(|\nabla u|)>k\}}|f|^2(g_1(|\nabla u|)^{-1}(G_T(|\nabla u|)-k)_++|\nabla u|^2)\eta^2\,dx\leq 2\int_{B\cap \{G_T(|\nabla u|)>k\}}|f|^2|\nabla u|^2\eta^2\,dx
\end{align}
Combining \eqref{L:caccio:almostfinal1}-\eqref{L:caccio:almostfinal3}, we obtain
\begin{align*}
&\int_B\biggl(g_1(|\nabla u|)(G_T(|\nabla u|)-k)_+|\nabla^2 u|^2+|\nabla (G_T(|\nabla u|)-k)_+|^2\biggr)\eta^2\,dx\\
\leq&c\int_B\biggl(\frac{g_{2,\e}(\nabla u)}{g_1(|\nabla u|)}+1\biggr)(G_T(|\nabla u|)-k)_+^2|\nabla \eta|^2\,dx+c\int_{B\cap \{G_T(|\nabla u|)>k\}}|f|^2|\nabla u|^2\eta^2\,dx
\end{align*}
and the claim follows from $1\leq \frac{g_{2,\e}(t)}{g_1(t)}$ for all $t>0$.
\end{proof}

Combining the Caccioppoli inequality of Lemma~\ref{L:caccio} with Lemma~\ref{L:basiciteration}, we obtain the following local $L^\infty$-bound on $G_T(|\nabla u|)$
\begin{lemma}\label{L:apriorilipschitz}
Suppose Assumption~\ref{ass:reg} is satisfied for some $\e,T\in(0,1]$ and let $u\in W^{1,\infty}_{\rm loc}(B)$ be a weak solution to \eqref{eq:aeps}. Set
\begin{equation}\label{def:gammavartheta}
\gamma:=\frac12\frac{q-p}p\max\biggl\{\kappa,\frac{n-3}2\biggr\}+\frac{q}{2p},\qquad\tilde \gamma:=\frac12\frac{q-p}p\max\biggl\{\kappa,\frac{n-3}2\biggr\}+\frac1p
\end{equation}
and suppose that $\gamma,\gamma'\in(0,1)$. Then there exists $c=c(\gamma,\kappa,\Lambda,n,p,q)\in[1,\infty)$ such that
\begin{align}\label{claim:L:apriorilipschitz}
\|G_T(|\nabla u|)\|_{L^\infty(\frac12B)}\leq& c(1+\tfrac{\e}{(\mu^2+T^2)^\frac{p-2}2})^{\max\{\kappa,\frac{n-3}2\}+1}\fint_B G_T(|\nabla u|)+c\biggl(\fint_BG_T(|\nabla u|)\biggr)^{\frac12\frac1{1-\gamma}}\notag\\
&+c(1+\tfrac{\e}{(\mu^2+T^2)^\frac{p-2}2})^{\frac12\max\{\kappa,\frac{n-3}2\}}((T^2+\mu^2)^\frac12+(T^2+\mu^2)^{\frac12\tilde \gamma p})\|f\|_{L^{n,1}(B)}\notag\\
&+c\biggl((1+\tfrac{\e}{(\mu^2+T^2)^\frac{p-2}2})^{\frac12\max\{\kappa,\frac{n-3}2\}}\|f\|_{L^{n,1}(B)}\biggr)^\frac{p}{p-1}\notag\\
&+c\|f\|_{L^{n,1}(B)}^\frac1{1-\tilde \gamma}.
\end{align}
\end{lemma}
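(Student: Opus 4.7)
Set $v := G_T(|\nabla u|) \geq 0$, $T_\mu := (\mu^2+T^2)^{1/2}$, and $\epsilon_T := 1 + \e(\mu^2+T^2)^{-(p-2)/2}$. On $\{v>0\} = \{|\nabla u|>T\}$ the explicit formula $G_T(t) = \tfrac{\nu}{p}[(\mu^2+t^2)^{p/2} - (\mu^2+T^2)^{p/2}]$ yields $|\nabla u|\leq c(v^{1/p}+T_\mu)$, and a direct estimate from Assumption~\ref{ass:reg} gives, on any sub-ball $B_\sigma\subset B$,
$$\frac{g_{2,\e}(|\nabla u|)}{g_1(|\nabla u|)} \leq c\epsilon_T\bigl(1+\|v\|_{L^\infty(B_\sigma)}^{(q-p)/p}+T_\mu^{q-p}\bigr) =: M_1(\sigma)^2, \quad |\nabla u|^2 \leq c\bigl(\|v\|_{L^\infty(B_\sigma)}^{2/p}+T_\mu^2\bigr) =: M_2(\sigma)^2.$$
With these $x$-independent constants, the Caccioppoli inequality of Lemma~\ref{L:caccio}, applied with cutoffs $\eta\in C_c^1(B_\sigma)$ and levels $k\geq 0$, is precisely hypothesis \eqref{L:basic:caccio} of Lemma~\ref{L:basiciteration} with $k_0=0$.

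Write $L(r) := \|v\|_{L^\infty(B_r)}$ and $\theta := \max\{\kappa,(n-3)/2\}$. For any $R/2\leq\rho<\sigma\leq R$, I would cover $B_\rho$ by a finite collection of balls of radius $\simeq \sigma-\rho$ contained in $B_\sigma$, apply Lemma~\ref{L:basiciteration} on each with the uniform constants $M_1(\sigma), M_2(\sigma)$, and combine with the trivial interpolation $(\fint v^2)^{1/2}\leq L(\sigma)^{1/2}(\fint v)^{1/2}$ to obtain
$$L(\rho) \leq \frac{C\,M_1(\sigma)^{1+\theta} L(\sigma)^{1/2}}{(\sigma-\rho)^{n/2}}\Bigl(\fint_{B_\sigma}v\,dx\Bigr)^{1/2} + C\,M_1(\sigma)^\theta M_2(\sigma)\,\|f\|_{L^{n,1}(B_\sigma)}.$$
Expanding the two prefactors via $(a+b+c)^r\leq c_r(a^r+b^r+c^r)$, a short computation shows that the $L(\sigma)$-dominant exponents are exactly $\gamma=\tfrac{q-p}{2p}(1+\theta)+\tfrac12$ in the first bracket and $\tilde\gamma=\tfrac{q-p}{2p}\theta+\tfrac1p$ in the second, while the remaining subdominant $L(\sigma)$-powers, after collection, are $L(\sigma)^{1/2}$ and $L(\sigma)^{1/p}$, and the $L(\sigma)$-free contributions are controlled by $T_\mu$-weights (bounded since $T,\mu\leq 1$).

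Now to each $L(\sigma)^a$-piece I apply Young's inequality $L^aX\leq \delta L + C_\delta X^{1/(1-a)}$, admissible since $\gamma,\tilde\gamma\in(0,1)$ by hypothesis and $1/2,1/p<1$. The $L(\sigma)^{1/2}$-pieces together produce term~1 of \eqref{claim:L:apriorilipschitz}, namely $c\epsilon_T^{1+\theta}\fint v$; the $L(\sigma)^\gamma$-piece produces term~2, $c(\fint v)^{1/(2(1-\gamma))}$; the $L(\sigma)$-free pieces of $M_1^\theta M_2\|f\|$ collapse, after the auxiliary Young step $L^{(q-p)\theta/(2p)}T_\mu\lesssim L^{\tilde\gamma}+T_\mu^{\tilde\gamma p}$ (which uses $\tilde\gamma=(q-p)\theta/(2p)+1/p$), into term~3, $c\epsilon_T^{\theta/2}(T_\mu+T_\mu^{\tilde\gamma p})\|f\|$; the $L(\sigma)^{1/p}$-piece has Young dual exponent $p/(p-1)$ and produces term~4, $c(\epsilon_T^{\theta/2}\|f\|)^{p/(p-1)}$; and the $L(\sigma)^{\tilde\gamma}$-piece produces term~5, $c\|f\|^{1/(1-\tilde\gamma)}$. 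Choosing the $\delta$'s so the total $L(\sigma)$-absorption constant is at most $\tfrac12$ yields $L(\rho)\leq\tfrac12 L(\sigma)+C(\sigma-\rho)^{-A}\cdot[\text{the five RHS bounds}]$, and Lemma~\ref{L:holefilling} applied to $Z(r):=L(r)$ on $[R/2,R]$ then eats the $\tfrac12 L(\sigma)$ term and delivers \eqref{claim:L:apriorilipschitz}. The main difficulty lies in this last bookkeeping: verifying that each of the many cross-terms in the expansions of $M_1^{1+\theta}L^{1/2}$ and $M_1^\theta M_2$ lands in exactly one of the five designated slots with the right $\epsilon_T$- and $T_\mu$-powers, and that the summed $L(\sigma)$-absorption constants can be made strictly less than $1$.
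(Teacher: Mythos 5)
Your argument is essentially the paper's own proof: the same preliminary bounds on $g_{2,\e}/g_1$ and $|\nabla u|$ in terms of $G_T$, the same application of Lemma~\ref{L:basiciteration} with $M_1,M_2$ taken as sup-norms over the larger ball (via Lemma~\ref{L:caccio}), the interpolation $\|\cdot\|_{L^2}\leq(\|\cdot\|_{L^\infty}\|\cdot\|_{L^1})^{1/2}$, Young's inequality using $\gamma,\tilde\gamma\in(0,1)$, and absorption through Lemma~\ref{L:holefilling}. One small bookkeeping caveat: since you put $1+\e(\mu^2+T^2)^{-(p-2)/2}$ \emph{multiplicatively} inside $M_1(\sigma)^2$, the $L(\sigma)^{\gamma}$- and $L(\sigma)^{\tilde\gamma}$-pieces inherit powers of this factor, so your Young step produces the second and fifth terms of \eqref{claim:L:apriorilipschitz} only with extra (harmless for the later limit $\e\to0$, but not claimed) $\e$-dependent prefactors; splitting additively, $g_{2,\e}/g_1\lesssim G_T^{(q-p)/p}+1+\e(\mu^2+T^2)^{-(p-2)/2}$ as in \eqref{eq:relateg1gT}, removes them and yields the estimate exactly as stated.
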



\begin{remark}\label{rem:gamma}
Note that $1<p\leq q$ and $\kappa\in(0,\frac12)$ imply that $\gamma$ and $\tilde \gamma$ defined in \eqref{def:gammavartheta} are positive. Moreover, for $n\geq4$ relation \eqref{eq:pqrhs} imply $\gamma,\tilde \gamma<1$. Indeed, we have 
\begin{align*}
\frac{q}p<1+\frac2{n-1}\quad\Rightarrow&\quad \gamma=\frac12\frac{q-p}p\frac{n-3}2+\frac{q}{2p}<\frac12\frac{2}{n-1}\frac{n-3}2+\frac12+\frac1{n-1}=1\\
\frac{q}p<1+\frac{4(p-1)}{p(n-3)}\quad\Rightarrow&\quad \tilde \gamma=\frac12\frac{q-p}p\frac{n-3}2+\frac1p<\frac12\frac{4(p-1)}{p(n-3)}\frac{n-3}2+\frac1p=1.
\end{align*}
In dimension $n=3$ and $\kappa\in(0,\frac12)$, a straightforward computation yields that $\kappa<\frac{2p-q}{q-p}$ implies $\gamma<1$ and $\kappa<2\frac{p-1}{q-p}$ implies $\tilde \gamma<1$.
\end{remark}

\begin{proof}[Proof of Lemma~\ref{L:apriorilipschitz}] Throughout the proof we write $\lesssim$ if $\leq$ holds up to a multiplicative constant depending only on $\kappa,\Lambda,\nu,n,p$ and $q$.

\step 0 Technical estimates.

There exists $c_1=c_1(\nu,\Lambda,p,q)\in[1,\infty)$ such that for all $t\geq T>0$ holds

\begin{equation}\label{eq:relateg1gT}
\frac{g_{2,\e}(t)}{g_1(t)}\leq c_1 (G_T(t)^\frac{q-p}p+1+\frac{\e}{(\mu^2+T^2)^{\frac{p-2}2}})\quad\mbox{and}\quad t\leq c_1 G_T(t)^\frac1p+(\mu^2+T^2)^\frac12.
\end{equation}
To establish \eqref{eq:relateg1gT}, we first compute for all $t\geq T$
\begin{equation}\label{GTt}
G_T(t)=\nu\int_T^{t}(\mu^2+s^2)^\frac{p-2}2s\,ds=\frac\nu{p}(\mu^2+t^2)^\frac{p}2-\frac\nu{p}(\mu^2+T^2)^\frac{p}2
\end{equation}
and thus
\begin{align*}
\frac{g_{2,\e}(t)}{g_1(t)}=&\frac{\Lambda}\nu(\mu^2+t^2)^{\frac{q-p}2}+\frac{\Lambda}\nu+\e\frac{\Lambda}\nu\frac{(1+t^2)^\frac{\min\{p-2,0\}}2}{(\mu^2+t^2)^{\frac{p-2}2}}\\
\leq& \frac{\Lambda}\nu \biggl(\frac{p}\nu G_T(t)+(\mu^2+T^2)^\frac{p}2\biggr)^{\frac{q-p}p}+\frac\Lambda\nu+\e\frac{\Lambda}{\nu}(1+(\mu^2+T^2)^{-\frac{p-2}2}),
\end{align*}
which implies the first estimate of \eqref{eq:relateg1gT} (recall $\mu\in[0,1]$ and $\e,T\in(0,1]$). The second estimate of \eqref{eq:relateg1gT} follows from \eqref{GTt} in the form: For $t\geq T>0$ holds
$$
t^p\leq \frac{p}\nu G_T(t)+(\mu^2+T^2)^\frac{p}2
$$
and the second estimate of \eqref{eq:relateg1gT} follows by taking the $p$-th root.

\step 1 In this step, we suppose $B_1\Subset B$ and prove 
\begin{align}\label{est:keyapprio}
&\|G_T(|\nabla u|)\|_{L^\infty(B_\frac14)}\notag\\
\lesssim&\|G_T(|\nabla u|)\|_{L^\infty(B_1)}^\gamma\|G_T(|\nabla u|)\|_{L^1(B_1)}^\frac12\notag\\
&+(1+\tfrac{\e}{(\mu^2+T^2)^\frac{p-2}2})^{\frac12\max\{\kappa,\frac{n-3}2\}+\frac12}\|G_T(|\nabla u|)\|_{L^\infty(B_1)}^\frac12\|G_T(|\nabla u|)\|_{L^1(B_1)}^{\frac12}\notag\\
&+(1+\tfrac{\e}{(\mu^2+T^2)^\frac{p-2}2})^{\frac12\max\{\kappa,\frac{n-3}2\}}(\|G_T(|\nabla u|)\|_{L^\infty(B_1)}^\frac1p+(\mu^2+T^2)^\frac12)\|f\|_{L^{n,1}(B_1)}\notag\\
&+(\|G_T(|\nabla u|)\|_{L^\infty(B_1)}^{\tilde \gamma}+(\mu^2+T^2)^{\frac12 \tilde \gamma p})\|f\|_{L^{n,1}(B_1)}
\end{align}


%
where $\gamma$ and $\tilde \gamma$ are defined in \eqref{def:gammavartheta}.

%
A direct consequence of the Caccioppoli inequality of Lemma~\ref{L:caccio} and the iteration Lemma~\ref{L:basiciteration} with the choice
$$
M_1^2=\biggl\|\frac{g_{2,\e}(|\nabla u|)}{g_1(|\nabla u|)}\biggr\|_{L^\infty(B_1\cap\{|\nabla u| \geq T\})}\qquad\mbox{and}\qquad M_2^2=\|\nabla u\|_{L^\infty(B_1)}^2
$$
is the following Lipschitz estimate 
%
%
\begin{align}\label{est:keyapprio:prep1}
&\|G_T(|\nabla u|)\|_{L^\infty(B_\frac14)}\notag\\
\lesssim&\biggl(\biggl\|\frac{g_{2,\e}(|\nabla u|)}{g_1(|\nabla u|)}\biggr\|_{L^\infty(B_1\cap\{|\nabla u| \geq T\})}\biggr)^{\frac12+\frac12\max\{\kappa,\frac{n-3}2\}}\|G_T(|\nabla u|)\|_{L^{2}(B_1)}\notag\\
&+\biggl(\biggl\|\frac{g_{2,\e}(|\nabla u|)}{g_1(|\nabla u|)}\biggr\|_{L^\infty(B_1\cap\{|\nabla u| \geq T\})}\biggr)^{\frac12\max\{\kappa,\frac{n-3}2\}}\|\nabla u\|_{L^\infty(B_1 )}\|f\|_{L^{n,1}(B_1)}.
\end{align}
Estimate \eqref{est:keyapprio} follows from \eqref{est:keyapprio:prep1} in combination with  \eqref{eq:relateg1gT} in the form
\begin{align}\label{est:keyapprio:prep2}
\biggl\|\frac{g_{2,\e}(|\nabla u|)}{g_1(|\nabla u|)}\biggr\|_{L^\infty(B_1\cap\{|\nabla u| \geq T\})}\lesssim \|G_T(|\nabla u|)\|_{L^\infty(B_1)}^\frac{q-p}p+1+\frac{\e}{(\mu^2+T^2)^{\frac{p-2}2}}
\end{align}
and
\begin{equation*}
\|\nabla u\|_{L^\infty(B_1)}\leq c_1 \|G_T(|\nabla u|)\|_{L^\infty(B_1)}^\frac1p+(\mu^2+T^2)^\frac12,
\end{equation*}
and the elementary interpolation inequality $\|\cdot\|_{L^2}\leq (\|\cdot\|_{L^\infty}\|\cdot\|_{L^1})^\frac12$.

%
%
%

\step 2 Conclusion

Appealing to standard scaling and covering arguments, we deduce from Step~1 the following: For every $x_0\in B$ and $0<\rho<\sigma$ satisfying $B_\sigma(x_0)\Subset B$ it holds
\begin{align}\label{est:keyapprio1}
&\|G_T(|\nabla u|)\|_{L^\infty(B_\rho(x_0))}\notag\\
\lesssim&(\sigma-\rho)^{-\frac{n}2}\|G_T(|\nabla u|)\|_{L^\infty(B_\sigma(x_0))}^\gamma\|G_T(|\nabla u|)\|_{L^1(B_\sigma(x_0))}^\frac12\notag\\
&+(\sigma-\rho)^{-\frac{n}2}(1+\tfrac{\e}{(\mu^2+T^2)^\frac{p-2}2})^{\frac12\max\{\kappa,\frac{n-3}2\}+\frac12}\|G_T(|\nabla u|)\|_{L^\infty(B_\sigma(x_0))}^{\frac12}\|G_T(|\nabla u|)\|_{L^1(B_\sigma(x_0))}^{\frac12}\notag\\
&+(1+\tfrac{\e}{(\mu^2+T^2)^\frac{p-2}2})^{\frac12\max\{\kappa,\frac{n-3}2\}}(\|G_T(|\nabla u|)\|_{L^\infty(B_\sigma(x_0))}^\frac1p+(\mu^2+T^2)^\frac12)\|f\|_{L^{n,1}(B_\sigma(x_0))}\notag\\
&+(\|G_T(|\nabla u|)\|_{L^\infty(B_\sigma(x_0))}^{\tilde \gamma}+(\mu^2+T^2)^{\frac12 \tilde \gamma p})\|f\|_{L^{n,1}(B_\sigma(x_0))}
\end{align}

From estimate \eqref{est:keyapprio1} in combination with Young inequality and assumption $\gamma,\tilde \gamma\in(0,1)$, we obtain the existence of $c=c(\kappa,\nu,\Lambda,n,p,q)\in[1,\infty)$ such that
\begin{align*}
\|G_T(|\nabla u|)\|_{L^\infty(B_\rho(x_0))}\leq& \frac12\|G_T(|\nabla u|)\|_{L^\infty(B_\sigma(x_0))}+c\frac{\|G_T(|\nabla u|)\|_{L^1(B_\sigma(x_0))}^{\frac12\frac1{1-\gamma}}}{(\sigma-\rho)^{\frac1{1-\gamma}\frac{n}2}}\\
&+ c\frac{(1+\tfrac{\e}{(\mu^2+T^2)^{\frac{p-2}2}})^{\max\{\kappa,\frac{n-3}2\}+1}\|G_T(|\nabla u|)\|_{L^1(B_\sigma(x_0))}}{(\sigma-\rho)^{n}}\\
&+(1+\tfrac{\e}{(\mu^2+T^2)^\frac{p-2}2})^{\frac12\max\{\kappa,\frac{n-3}2\}}((T^2+\mu^2)^\frac12+(T^2+\mu^2)^{\frac12\tilde \gamma p})\|f\|_{L^{n,1}(B_\sigma(x_0))}\\
&+c\biggl((1+\tfrac{\e}{(\mu^2+T^2)^\frac{p-2}2})^{\frac12\max\{\kappa,\frac{n-3}2\}}\|f\|_{L^{n,1}(B_\sigma(x_0))}\biggr)^\frac{p}{p-1}\\
&+c\|f\|_{L^{n,1}(B_\sigma(x_0))}^\frac{1}{1-\tilde \gamma}.
\end{align*}
The claimed inequality \eqref{claim:L:apriorilipschitz} (for $B=B_1$) now follows from Lemma~\ref{L:holefilling}.
\end{proof}

\section{Proof of Theorem~\ref{T:1}}

In this section, we prove Theorem~\ref{T:1} together with a suitable gradient estimate. More precisely, we show the following result which obviously contains the statement of Theorem~\ref{T:1}
\begin{theorem}\label{T:2}
Let $\Omega\subset\R^n$, $n\geq3$ be an open bounded domain and suppose Assumption~\ref{ass} is satisfied with $1<p<q<\infty$ such that \eqref{eq:pqrhs}. Let $u\in W_{\rm loc}^{1,1}(\Omega)$ be a local minimizer of the functional $\mathcal F$ given in \eqref{eq:int} with $f\in L^{n,1}(\Omega)$. Then $\nabla u$ is locally bounded in $\Omega$.  Moreover, for every $\kappa\in(0,\min\{\frac12,\frac{2p-q}{q-p},2\frac{p-1}{q-p}\})$ there exists $c=c(\kappa,\Lambda,\nu,n,p,q)\in[1,\infty)$ such that for all $B\Subset \Omega$ it holds

\begin{align}\label{est:T1}
\|\nabla u\|_{L^\infty(\frac12 B)}\leq& c\biggl(\fint_B F(\nabla u)\,dx+\|f\|_{L^{n,1}(B)}^\frac{p}{p-1}\biggr)^\frac1p\notag\\
&+c\biggl(\fint_B F(\nabla u)\,dx+\|f\|_{L^{n,1}(B)}^\frac{p}{p-1}\biggr)^{\alpha_n}\notag\\
&+c\|f\|_{L^{n,1}(B)}^{\beta_n}
\end{align}

where

\begin{align}\label{def:alphabetan}
\alpha_n:=\begin{cases}\frac{2}{(n+1)p-(n-1)q}&\mbox{if $n\geq4$}\\ \frac1{2p-q-(q-p)\kappa}&\mbox{if $n=3$}\end{cases},\quad \beta_n:=\begin{cases}\frac{4}{4(p-1)-(q-p)(n-3)}&\mbox{if $n\geq4$}\\\frac2{2(p-1)-(q-p)\kappa}&\mbox{if $n=3$}\end{cases}
\end{align}
%
%
In the case $n\geq4$ the constant $c$ in \eqref{est:T1} is independent of $\kappa$. When $p\geq 2-\frac{4}{n+1}$ or when $f\equiv0$ condition \eqref{eq:pqrhs} can be replaced by \eqref{eq:pq}.
\end{theorem}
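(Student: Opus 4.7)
The plan is to obtain \eqref{est:T1} first for regular minimizers of a uniformly elliptic regularization of $\mathcal F$ --- where Lemma~\ref{L:apriorilipschitz} applies directly --- and then transfer the estimate to $u$ by a careful approximation argument. Concretely, fix $B=B_R(x_0)\Subset\Omega$ and pick a mollification $F_h=F*\varphi_h$ of $F$; define the regularized integrand $F_{h,\e}(z):=F_h(z)+\frac{\e}{p}(1+|z|^2)^{p/2}$, so that $\mathbf a_{h,\e}:=\partial F_{h,\e}$ satisfies Assumption~\ref{ass:reg} (with $T$ comparable to $1$), and let $u_{h,\e}$ be the unique minimizer of $w\mapsto\int_B F_{h,\e}(\nabla w)-fw\,dx$ in $u_h+W^{1,q}_0(B)$, where $u_h$ is a suitable mollification of $u$. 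For each fixed $\e>0$ the integrand is smooth and uniformly elliptic with growth between $p$ and $q$, and standard linear theory upgrades $u_{h,\e}$ to $W^{1,\infty}_{\mathrm{loc}}(B)\cap W^{2,2}_{\mathrm{loc}}(B)$, so that $u_{h,\e}$ is a classical solution of \eqref{eq:aeps} with $\mathbf a_\e=\mathbf a_{h,\e}$.

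Next I would apply Lemma~\ref{L:apriorilipschitz} to $u_{h,\e}$ on a slightly smaller ball. Condition \eqref{eq:pqrhs} together with Remark~\ref{rem:gamma} guarantees $\gamma,\tilde\gamma\in(0,1)$ for a suitable $\kappa$, so that estimate \eqref{claim:L:apriorilipschitz} is available. With $T\sim 1$ the prefactor $(1+\e(\mu^2+T^2)^{-(p-2)/2})$ is bounded uniformly in $\e$, and the auxiliary estimates \eqref{eq:relateg1gT} convert $\|G_T(|\nabla u_{h,\e}|)\|_{L^\infty}$ into $\|\nabla u_{h,\e}\|_{L^\infty}$ and $\fint_B G_T(|\nabla u_{h,\e}|)$ into $\fint_B F(\nabla u_{h,\e})+C$. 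Bookkeeping the exponents through \eqref{claim:L:apriorilipschitz}, i.e.\ computing $\tfrac{1}{2p(1-\gamma)}$ and $\tfrac{1}{p(1-\tilde\gamma)}$ against $\gamma,\tilde\gamma$ from \eqref{def:gammavartheta}, produces precisely the exponents $\alpha_n$ and $\beta_n$ from \eqref{def:alphabetan}.

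It remains to compare energies and pass to the limit. By minimality of $u_{h,\e}$ one has
\begin{equation*}
\int_B F_{h,\e}(\nabla u_{h,\e})-f u_{h,\e}\,dx\leq \int_B F_{h,\e}(\nabla u_h)-f u_h\,dx,
\end{equation*}
from which, using the $p$-coercivity in \eqref{ass:Fpq}, Poincar\'e, and Young together with H\"older, one extracts a uniform bound
\begin{equation*}
\int_B F(\nabla u_{h,\e})\,dx\lesssim \int_B F(\nabla u_h)\,dx+\|f\|_{L^{n,1}(B)}^{p/(p-1)}.
\end{equation*}
Combining this with the output of Step~2 yields a Lipschitz bound for $u_{h,\e}$ on $\tfrac12 B$ independent of $\e$ and $h$. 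Letting $\e\to0$ first and then $h\to0$, weak-$*$ compactness provides a limit $\tilde u\in W^{1,\infty}(\tfrac12 B)$ that inherits the bound; strict convexity of $F$, lower semicontinuity, and local minimality of $u$ identify $\tilde u$ with $u$ on $\tfrac12 B$, proving \eqref{est:T1}. The dichotomy between \eqref{eq:pqrhs} and \eqref{eq:pq} in the theorem corresponds exactly to whether one needs both $\gamma<1$ and $\tilde\gamma<1$ or only $\gamma<1$; the latter is sufficient when $f\equiv0$ (the $\tilde\gamma$-term disappears) and, when $p\geq 2-\tfrac{4}{n+1}$, the constraint from $\tilde\gamma<1$ becomes weaker than that from $\gamma<1$ and is automatic.

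The main obstacle is the approximation step: ensuring that $\int F(\nabla u_h)\to \int F(\nabla u)$ on $B$ (or more precisely, that no Lavrentiev gap opens between the regularized and original problems). This is the reason the gap condition \eqref{eq:pqrhs} is unavoidable here --- it underlies both the applicability of Lemma~\ref{L:apriorilipschitz} and the higher integrability of $u$ needed to make the mollification $u_h$ an admissible energy competitor with correctly converging energy. Once this is arranged, all remaining steps are uniform estimates and soft compactness/lower semicontinuity arguments.
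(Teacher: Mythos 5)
Your overall scheme is the paper's: regularize the integrand and mollify the boundary datum, apply Lemma~\ref{L:apriorilipschitz} to the regularized minimizers, compare energies via minimality and convexity, pass to the limit by weak-$*$ compactness and lower semicontinuity, and identify the limit with $u$ by strict convexity; your exponent bookkeeping ($\alpha_n=\tfrac1{2p}\tfrac1{1-\gamma}$, $\beta_n=\tfrac1p\tfrac1{1-\tilde\gamma}$) and the discussion of when only $\gamma<1$ is needed are also correct. The first genuine gap is the assertion that for fixed $\e>0$ the integrand $F_{h,\e}$ is ``uniformly elliptic'' so that ``standard linear theory'' upgrades $u_{h,\e}$ to $W^{1,\infty}_{\rm loc}\cap W^{2,2}_{\rm loc}$. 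Adding $\tfrac\e p(1+|z|^2)^{p/2}$ only reinforces the lower $p$-growth bound; the upper bound on $\partial^2F_{h,\e}$ is still of order $|z|^{q-2}$, so the ellipticity ratio grows like $|z|^{q-p}$ and the approximating problems remain genuinely non-uniformly elliptic $(p,q)$-problems. The local Lipschitz continuity of the approximating minimizers is precisely the qualitative hypothesis needed to start Lemma~\ref{L:apriorilipschitz} (and the differentiation/Caccioppoli argument of Lemma~\ref{L:caccio}), and it cannot be waved through: the paper obtains it from \cite[Theorem~4.10]{BM20}, based on the global Lipschitz result of \cite{BB16}, exploiting convexity, the Lipschitz (mollified) boundary datum, and a truncation $f_m=\min\{\max\{f,-m\},m\}\in L^\infty$ of the forcing --- note that \eqref{eq:aeps} and Assumption~\ref{ass:reg} are formulated for bounded $f$, and you never truncate $f$. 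Without this ingredient your application of Lemma~\ref{L:apriorilipschitz} has no admissible solution to apply to.

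The second gap is the step you yourself call ``the main obstacle'': the energy convergence of the mollified competitor is left unproved, and your diagnosis of it is off. It does not rely on the gap condition \eqref{eq:pqrhs} nor on any higher integrability of $\nabla u$: since the regularized integrand is convex, Jensen's inequality gives $\int_{B_1}\widetilde F_{\e}\bigl(\nabla(u\ast\varphi_\e)\bigr)\,dx\le\int_{B_{1+\e}}\widetilde F_{\e}(\nabla u)\,dx$, and since the regularization coincides with $F$ outside a fixed ball and converges uniformly on bounded sets, the right-hand side converges to $\int_{B_1}F(\nabla u)\,dx$ once one checks $F(\nabla u)\in L^1_{\rm loc}$, which follows directly from local minimality plus H\"older and Sobolev applied to the $fu$-term. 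In the paper the relation \eqref{eq:pqrhs} enters only through $\gamma,\tilde\gamma<1$ in \eqref{def:gammavartheta}; attributing the admissibility of the mollified competitor to it invokes a mechanism (higher integrability \`a la Lavrentiev) that is neither available at that stage nor needed, so as written this part of your argument is missing, although it is repairable by the convexity argument above.
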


\begin{proof}[Proof of Theorem~\ref{T:2}]
Throughout the proof we write $\lesssim$ if $\leq$ holds up to a multiplicative constant that depends only on $\kappa,\Lambda,\nu,n,p$ and $q$. We assume that $B_2\Subset\Omega$ and show
\begin{align}\label{est:apriorilipschitzu0}
&\|\nabla u\|_{L^\infty(B_\frac12)}\notag\\
\lesssim&\biggl(\fint_{B_{1}}F(\nabla u)\,dx+\|f\|_{L^{n,1}(B_1)}^\frac{ p}{ p-1}\biggr)^\frac1p\notag\\
&+\biggl(\fint_{B_{1}}F(\nabla u)\,dx+\|f\|_{L^n(B_1)}^\frac{ p}{ p-1}\biggr)^{\frac1{2p}\frac1{1-\gamma}}+\|f\|_{L^{n,1}(B_1)}^{\frac1p\frac1{1-\tilde \gamma}},
\end{align}
where $\gamma$ and $\tilde\gamma$ are given in \eqref{def:gammavartheta}. Clearly, the conclusion follows from a standard scaling, translation and covering arguments using $\alpha_n={\frac1{2p}\frac1{1-\gamma}}$ and $\beta_n={\frac1p\frac1{1-\tilde \gamma}}$.

\step 0 Preliminaries.

Following \cite{BM20}, we introduce various regularizations on the minimizer $u$, the integrand $F$ and the forcing term $f$: For this we choose a decreasing sequence $(\e_m)_{m\in\mathbb N}\subset(0,1)$ satisfying $\e_m\to0$ as $m\to\infty$. We set $\overline u_m:=u\ast \varphi_{\e_m}$ with $\varphi_\e:=\e^{-n}\varphi(\frac{\cdot}\e)$ and $\varphi$ being a non-negative, radially symmetric mollifier, i.e. it satisfies
$$
\varphi\geq0,\quad {\rm supp}\; \varphi\subset B_1,\quad \int_{\R^n}\varphi(x)\,dx=1,\quad \varphi(\cdot)=\widetilde \varphi(|\cdot|)\quad \mbox{for some $\widetilde\varphi\in C^\infty(\R)$}.
$$
Moreover, we denote by $f_m$ the truncated forcing $f_m(x)=\min\{\max\{f(x),-m\},m\}$ and consider the functional
\begin{equation}
\mathcal F_{m}(w,B):=\int_B[F_{\e_m}(\nabla w)-f_mw]\,dx,
\end{equation}
where for all $z\in\R^n$
$$F_\e(z):=\widetilde F_{\e}(z)+\e L_p(z)\quad\mbox{with $L_p(z):=\frac12|z|^2$ for $p\geq2$ and $L_p(z):=(1+|z|^2)^\frac{p}2-1$ for $p\in(1,2)$}$$
and $\widetilde F_\e$ satisfies for all $\e\in(0,\e_0]$ with $\e_0=\e_0(F,T)\in(0,1]$
\begin{equation}\label{ass:tildeFeps}
\widetilde F_\e\geq0,\quad \mbox{$\widetilde F_\e\in C^2_{\rm loc}(\R^n)$ is convex and $\widetilde F_\e=F$ on $\R^n\setminus B_\frac{T}2$}.
\end{equation}
(obviously $\widetilde F_\e$ and thus $F_\e$ depends also on $T>0$ which is suppressed in the notation) and it holds
\begin{equation}\label{ass:tildeFeps1}
\limsup_{\e\to0}\sup_{z\leq T}|\widetilde F_\e(z)-F(z)|=0.
\end{equation}

 In the case $F\in C^2_{\rm loc}(\R^n)$, we simply set $\widetilde F_\e\equiv F$ and in the case that $F$ is singular at zero we give (a standard) smoothing and gluing construction in Step~3 below.

Clearly, the functionals $\mathcal F_{m}$ are strictly convex and we denote by $u_m\in W^{1,1}(B)$ the unique function satisfying
\begin{equation}
\mathcal F_m(u_m,B)\leq \mathcal F_m(v,B)\qquad\mbox{for all $v\in \overline u_m+W_0^{1,1}(B)$}
\end{equation}
Appealing to \cite[Theorem~4.10]{BM20} (based on \cite{BB16}), we have $u_m\in W_{\rm loc}^{1,\infty}(B)$. In particular it follows that $u_m$ satisfies the Euler-Lagrange equation
$$
-\divv (\partial F_{\e_m}(\nabla u_m))=f_m
$$
and since $\bfa_\e:=\partial F_{\e_m}$ satisfies Assumption~\ref{ass:reg} (with $\e=\e_m$) we can apply Lemma~\ref{L:apriorilipschitz}. Note that in view of Remark~\ref{rem:gamma}, the assumptions on $p,q$ and $\kappa$ ensure $\gamma,\tilde \gamma\in (0,1)$. 

\step 1 We claim that 
\begin{align}\label{est:apriorilipschitzum}
&\|\nabla u_m\|_{L^\infty(B_\frac12)}^p\notag\\
\lesssim&(1+\tfrac{\e_m}{(\mu^2+T^2)^\frac{p-2}2})^{\max\{\kappa,\frac{n-3}2\}+1}\biggl(\int_{B_{1+\e_m}}\widetilde F_{\e_m}(\nabla u)\,dx+\e_m\int_{B_1}L_p(\nabla \overline u_m)\,dx+\|f\|_{L^n(B_1)}^\frac{ p}{ p-1}+T^{ p}+\mu^{ p}\biggr)\notag\\
&+\biggl(\int_{B_{1+\e_m}}\widetilde F_{\e_m}(\nabla u)\,dx+\e_m\int_{B_1}L_p(\nabla \overline u_m)\,dx+\|f\|_{L^n(B_1)}^\frac{ p}{ p-1}+T^{ p}+\mu^{ p}\biggr)^{\frac12\frac1{1-\gamma}}\notag\\
&+(1+\tfrac{\e_m}{(\mu^2+T^2)^\frac{p-2}2})^{\frac12\max\{\kappa,\frac{n-3}2\}}((T^2+\mu^2)^\frac12+(T^2+\mu^2)^{\frac12\tilde \gamma p})\|f\|_{L^{n,1}(B_1)}\notag\\
&+\biggl((1+\tfrac{\e_m}{(\mu^2+T^2)^\frac{p-2}2})^{\frac12\max\{\kappa,\frac{n-3}2\}}\|f\|_{L^{n,1}(B_1)}\biggr)^\frac{p}{p-1}\notag\\
&+\|f\|_{L^{n,1}(B_1)}^\frac1{1-\tilde \gamma}
\end{align}
 and
\begin{equation}\label{est:eneum}
\int_{B_1}F_{\e_m}(\nabla u_{m})\,dx\lesssim \int_{B_{1+\e_m}}\widetilde F_{\e_m}(\nabla u)\,dx+\e_m\int_{B_1}L_p(\nabla \overline u_m)\,dx+\|f\|_{L^n(B_1)}^{\frac{ p}{ p-1}}+(\mu^2+T^2)^\frac{p}2.
\end{equation}

A combination of H\"older and Sobolev inequality with the elementary inequality 
$$
\nu |z|^p\leq \nu(\mu^2+T^2)^\frac{p}2+F_{\e_m}(z) 
$$ 
(which follows from the definition of $F_\e$, \eqref{ass:tildeFeps} and \eqref{ass:Fpq}) yields
\begin{align*}
\|f_m(u_{m}-\overline u_m)\|_{L^1(B_1)}\leq& \|f_m\|_{L^n(B_1)}\|u_m-\overline u_m\|_{L^{\frac{n}{n-1}}(B_1)}\\
\leq&c(n, p) \|f_m\|_{L^n(B_1)}\|\nabla (u_m-\overline u_m)\|_{L^{p}(B_1)}\\
\leq&c\|f_m\|_{L^n(B_1)}\biggl(\int_{B_1}F_{\e_m}(\nabla u_{m})+F_{\e_m}(\nabla \overline u_m)\,dx+(\mu^2+T^2)^\frac{p}2\biggr)^\frac1{ p}\\
\leq&\tfrac1{ p}\biggl(\int_{B_1}F_{\e_m}(\nabla u_{m})+F_{\e_m}(\nabla \overline u_m)\,dx+(\mu^2+T^2)^\frac{p}2\biggr)+(1-\tfrac1{ p})(c\|f_m\|_{L^n(B_1)})^\frac{ p}{ p -1}
\end{align*}
where $c=c(n,\nu,\Lambda,p,q)\in[1,\infty)$. Combining the above estimate with the minimality of $u_{m}$ and the convexity of $\widetilde F_\e$ in the form
\begin{align}\label{est:eneum2}
 \int_{B_1}F_{\e_m}(\nabla u_{m})\,dx\leq& \int_{B_1}F_{\e_m}(\nabla \overline u_m)-f_m(\overline u_m-u_{m})\,dx\notag\\
\leq& \int_{B_{1+\e_m}}\widetilde F_{\e_m}(\nabla u)\,dx+\int_{B_1}\e_mL_p(\nabla \overline u_m)-f_m(\overline u_m-u_{m})\,dx
\end{align}
we obtain \eqref{est:eneum}. The claimed Lipschitz-estimate \eqref{est:apriorilipschitzum} follows from Lemma~\ref{L:apriorilipschitz}, estimates \eqref{eq:relateg1gT}, \eqref{est:eneum} and
\begin{align*}
0\leq G_T(|\nabla u_m|)\stackrel{\eqref{GTt}}\lesssim& (\mu^2+|\nabla u_m|^2)^\frac{p}2\stackrel{\eqref{ass:Fpq}}{\lesssim} F_{\e_m}(\nabla u_m)+(\mu^2+T^2)^\frac{p}2.
\end{align*}

\step 2 Passing to the limit.

\substep{2.1}  We claim
\begin{equation}\label{claim:umem}
\lim_{m\to\infty}\e_m\int_{B_1}L_p(\nabla \overline u_m)\,dx=0
\end{equation}
and
\begin{equation}\label{claim:umem0}
\lim_{m\to\infty}\int_{B_{1+\e_m}}\widetilde F_{\e_m}(\nabla u)\,dx= \int_{B_1}F(\nabla u)\,dx.
\end{equation}
We first note that $F(\nabla u)\in L^1_{\rm loc}(B_2)$. Indeed, by Definition~\ref{def:localmin} combined with H\"older and Sobolev inequality, we have for every $\tilde B\Subset B_2$
\begin{align*}
\int_{\tilde B}F(\nabla u)\,dx=&\mathcal F(u,\tilde B)+\int_{\tilde B}fu\leq  \mathcal F(u,\tilde B)+\|f\|_{L^n(\tilde B)}\|u\|_{L^{\frac{n}{n-1}}(\tilde B)}\\
\lesssim& \mathcal F(u,\tilde B)+\|f\|_{L^n(\tilde B)}\|u\|_{W^{1,1}(\tilde B)}<\infty
\end{align*}
For $p\geq2$, equation \eqref{claim:umem} follows from
$$
\int_{B_1}L_p(\nabla\overline u_m)\,dx=\frac12\|\nabla \overline u_m\|_{L^2(B_1)}^2\lesssim \|\nabla u\|_{L^2(B_\frac32)}^2\lesssim \biggl(\int_{B_\frac32}F(\nabla u)\,dx\biggr)^\frac{2}{ p}<\infty.
$$
In the case $p\in(1,2)$, equation \eqref{claim:umem} is a consequence of 
$$
\|L_p(\nabla \overline u_m)\|_{L^1(B_1)}\lesssim 1+\|\nabla u\|_{L^p(B_\frac32)}^p\lesssim 1+\int_{B_\frac32}F(\nabla u)\,dx<\infty.
$$

The argument for \eqref{claim:umem0} follows from the identity 
$$
\int_{B_{1+\e_m}}\widetilde F_{\e_m}(\nabla u)\,dx= \int_{B_{1+\e_m}}F(\nabla u)\,dx+\int_{B_{1+\e_m}\cap \{|\nabla u|\leq T\}}\widetilde F_{\e_m}(\nabla u)-F(\nabla u)\,dx.
$$
together with the uniform convergence \eqref{ass:tildeFeps1} and $F(\nabla u)\in L^1_{\rm loc}(B_2)$.

\substep{2.2} Proof of \eqref{est:apriorilipschitzu0}.

From \eqref{est:apriorilipschitzum}, \eqref{est:eneum} and \eqref{claim:umem}, we deduce the existence of a subsequence and $\overline u\in u+W^{1,1}_0(B)$ such that
\begin{align*}
u_m\rightharpoonup \overline u&\qquad\mbox{weakly in $W^{1, p}(B_1)$}\\
u_m\rightharpoonup \overline u&\qquad\mbox{weakly$^*$ in $W^{1,\infty}(B_\frac12)$}
\end{align*}
In view of \eqref{est:apriorilipschitzum}, \eqref{claim:umem}, \eqref{claim:umem0} and the weak$^*$ lower semicontinuity of norms $\overline u$ satisfies
\begin{align}\label{est:apriorilipschitzu}
\|\nabla \overline u\|_{L^\infty(B_\frac12)}
\lesssim&\biggl(\int_{B_{1}}F(\nabla u)\,dx+\|f\|_{L^n(B_1)}^\frac{p}{ p-1}+T^p+\mu^p\biggr)^\frac1p\notag\\
&+\biggl(\int_{B_{1}}F(\nabla u)\,dx+\|f\|_{L^n(B_1)}^\frac{ p}{ p-1}+T^{ p}+\mu^{ p}\biggr)^{\frac1{2p}\frac1{1-\gamma}}\notag\\
&+((T^2+\mu^2)^\frac1{2p}+(T^2+\mu^2)^{\frac12\tilde \gamma })\|f\|_{L^{n,1}(B_1)}^\frac1p
+\|f\|_{L^{n,1}(B_1)}^{\frac1p\frac1{1-\tilde \gamma}}\notag\\
\lesssim&\biggl(\int_{B_{1}}F(\nabla u)\,dx+\|f\|_{L^{n,1}(B_1)}^\frac{p}{ p-1}+T^p+\mu^p\biggr)^\frac1p\notag\\
&+\biggl(\int_{B_{1}}F(\nabla u)\,dx+\|f\|_{L^{n}(B_1)}^\frac{ p}{ p-1}+T^{ p}+\mu^{ p}\biggr)^{\frac1{2p}\frac1{1-\gamma}}
+\|f\|_{L^{n,1}(B_1)}^{\frac1p\frac1{1-\tilde \gamma}},
\end{align}
where we use in the last estimate $\|f\|_{L^n(B_1)}\lesssim\|f\|_{L^{n,1}(B_1)}$ and Youngs inequality in the form 
$$
((T^2+\mu^2)^\frac1{2p}+(T^2+\mu^2)^{\frac12\tilde \gamma })\|f\|_{L^{n,1}(B_1)}^\frac1p\lesssim ((T^2+\mu^2)^\frac1{2}+\|f\|_{L^{n,1}(B_1)}^\frac1{p-1}+\|f\|_{L^{n,1}(B_1)}^{\frac1p\frac1{1-\tilde \gamma}}.
$$
By the definition of $F_\e$ we have
\begin{align*}
\int_{B_1}F_{\e_m}(\nabla u_m)\,dx\geq& \int_{B_1}\widetilde F_{\e_m}(\nabla u_m)\,dx\\
=&\int_{B_{1}}F(\nabla u_m)\,dx+\int_{B_{1}\cap \{|\nabla u|\leq T\}}\widetilde F_{\e_m}(\nabla u_m)-F(\nabla u_m)\,dx\\
\geq&\int_{B_{1}}F(\nabla u_m)\,dx-|B_1|\sup_{z\leq T}|\widetilde F_\e(z)-F(z)|.
\end{align*}
Hence, using convexity of $F$ and the uniform convergence \eqref{ass:tildeFeps1}, we can pass to the limit (along the above chosen subsequence) in \eqref{est:eneum2} and obtain with help of \eqref{claim:umem} and \eqref{claim:umem0}
\begin{align*}
\int_{B_1}F(\nabla \overline u)\,dx\leq \int_{B_{1}}F(\nabla u)-f(u-\overline u)\,dx
\end{align*}
and thus
$$
\mathcal F(\overline u,B_1)\leq \mathcal F(u,B_1).
$$
The above inequality combined with $\overline u\in u+W_0^{1,p}(B_1)$ and the strict convexity of $\mathcal F(\cdot,B_1)$ implies $\overline u=u$. The claimed estimate \eqref{est:apriorilipschitzu0} follows by sending $T$ to $0$ in \eqref{est:apriorilipschitzu} combined with \eqref{ass:Fpq} in the form $\nu\mu^p\leq \fint_{B}F(\nabla u)\,dx$.

\step 3 Construction of $\widetilde F_\e$.

Let $\rho\in C^\infty(\R,[0,1])$ be such that $\rho\equiv1$ on $(-\infty,\frac{T}4)$ and $\rho\equiv0$ on $(\frac{T}3,\infty)$.
We set $\hat F_\e:=F\ast \varphi_{\e}$ where $\varphi_\e$ is as in Step~1 and $\widetilde F_\e=\rho \hat F_\e+(1-\rho)F$. By general properties of the mollification, we have that $\hat F_\e$ is smooth, non-negative and convex, and thus $\widetilde F_\e$ is non-negative, locally $C^2$ and it holds $\widetilde F_\e\equiv F$ on $\R^n\setminus B_{T/2}$.  Since $F$ is convex and locally bounded, we also have \eqref{ass:tildeFeps1}. It remains to show that $\widetilde F_\e$ is convex for $\e>0$ sufficiently small. For this we observe that
$$
\nabla^2 \widetilde F=\rho \nabla^2 \hat F_\e+(1-\rho)\nabla^2 F+(\hat F_\e-F_\e)\nabla^2\rho +\nabla(\hat F_\e-F_\e)\otimes\nabla \rho+\nabla \rho\otimes \nabla(\hat F_\e-F_\e)
$$
is strictly positive definite since $\rho \nabla^2 \hat F_\e+(1-\rho)\nabla^2 F$ is strictly positive definite and the remainder tends to zero as $\e\to0$. 

\step 4 The case $f\equiv 0$. It is straightforward to check that the restriction $\tilde\gamma\in(0,1)$ is not needed in Lemma~\ref{L:apriorilipschitz} if $f\equiv0$ and since we checked in Remark~\ref{rem:gamma} that \eqref{eq:pq} suffices to ensure $\gamma\in(0,1)$ the claim follows.
\end{proof}

\section*{Acknowledgments}

PB was partially supported by the German Science Foundation DFG in context of the Emmy Noether Junior Research Group BE 5922/1-1.

\end{document}